\documentclass[hidelinks,onefignum,onetabnum]{siamart251216}

\newsiamremark{example}{Example}

\def\norm#1{\|#1\|}

\newcommand{\sipg}{SIPG\xspace}
\newcommand{\swip}{SWIP\xspace}
\newcommand{\sipgL}{SIPG-L\xspace}
\newcommand{\swipL}{SWIP-L\xspace}

\newcommand{\swipDL}{SWIPD-L\xspace}
\newcommand{\swipD}{SWIPD\xspace}
\newcommand{\swipDPL}{SWIPDP-L\xspace}
\newcommand{\swipDP}{SWIPDP\xspace}
\newcommand{\fem}{FEM\xspace}
\newcommand{\femL}{FEM-L\xspace}
\newcommand{\femC}{FEM-C\xspace}

\newcommand{\proj}{\Pi}
\newcommand{\coerexp}{\beta}
\newcommand{\stabfactor}{\alpha}
\newcommand{\limpff}{\widehat{\pf_K}}
\newcommand{\limpf}{\widehat{\pf_h^n}}
\newcommand{\pfmean}[1][]{\overline{\pf^{#1}_K}}

\newcommand{\pfmeanstar}[1][]{\overline{\pf^{#1}_{K^\star}}}

\newcommand{\pfmeanisecpm}{\overline{\pf_{K_\isec^\pm}}}
\newcommand{\pfmeanisecp}{\overline{\pf_{K_\isec^+}}}
\newcommand{\pfmeanisecm}{\overline{\pf_{K_\isec^-}}}

\newcommand{\grid}{{\mathcal{T}_h}}
\newcommand{\entity}{K}
\newcommand{\elem}{\entity}

\newcommand{\elemin}{\elem^{-}}
\newcommand{\elemout}{\elem^{+}}

\newcommand{\isec}{e}

\newcommand{\RRR}{\mathbb{R}}

\newcommand{\R}{\RRR}

\newcommand{\tol}{\text{TOL}}

\newcommand{\vect}[1]{\boldsymbol{#1}}

\newcommand{\nbold}{\boldsymbol{n}}

\newcommand{\ics}{\vect{x}}

\newcommand{\vjump}[1]{ [ {#1} ] }
\newcommand{\aver}[1]{\{ {#1} \} }

\newcommand{\haver}[1]{\langle {#1} \rangle }

\newcommand{\chem}{\upsilon}

\newcommand{\Peclet}{\mathrm{Pe}}
\newcommand{\Cahn}{\mathrm{Cn}}
\newcommand{\mob}{M}
\newcommand{\polyord}{p}
\newcommand{\pena}{\eta_\isec}
\newcommand{\dt}{\tau}

\newcommand{\Kplus}{{K^+_e}}
\newcommand{\Kminus}{{K^-_e}}
\newcommand{\varphiplus}{\varphi_{|\Kplus}}
\newcommand{\varphiminus}{\varphi_{|\Kminus}}

\newcommand{\pf}[0]{\psi}
\newcommand{\tilpf}[0]{\tilde{\psi}}

\newcommand{\N}[0]{\mathbb{N}}

\newcommand{\mb}[1]{\boldsymbol{#1}}

\newcommand{\ome}[0]{\Omega}
\newcommand{\mbu}[0]{\mb{u}}
\newcommand{\mbx}[0]{\mb{x}}

\newcommand{\vp}[0]{\varphi}

\newcommand{\Po}[0]{\mathbb{P}}

\newcommand{\innerProd}[2]{\left\langle #1, #2\right\rangle}

\usepackage{dune}

\usepackage{type1cm}        
\usepackage{makeidx}         
\usepackage{graphicx}        
\usepackage{multicol}        
\usepackage[bottom]{footmisc}

\usepackage{newtxtext}       %
\usepackage[varvw]{newtxmath}       

\usepackage[disable]{todonotes}

\usepackage{lipsum}
\usepackage{amsfonts}
\usepackage{graphicx}
\usepackage{epstopdf}
\usepackage{orcidlink}
\usepackage{algorithmic}
\ifpdf
  \DeclareGraphicsExtensions{.eps,.pdf,.png,.jpg}
\else
  \DeclareGraphicsExtensions{.eps}
\fi

\newsiamremark{remark}{Remark}
\newsiamremark{hypothesis}{Hypothesis}
\crefname{hypothesis}{Hypothesis}{Hypotheses}
\newsiamthm{claim}{Claim}
\newsiamremark{fact}{Fact}
\crefname{fact}{Fact}{Facts}

\headers{DG Scheme for Cahn-Hillard with Maximum Principle}{J.K. Gunnarsson and R. Kl\"ofkorn}

\title{A Discontinuous Galerkin Scheme for the Cahn-Hilliard Equations with
Discrete Maximum Principle for Arbitrary Polynomial Order
\thanks{Submitted to the editors \today.
\funding{This work was funded by the Swedish Research Council under contract AI-Twin~(2024-04904).}}}

\author{Jimmy Kornelije Gunnarsson\thanks{Centre for Mathematical Sciences, Lund
University, Box 117, 22100 Lund, Sweden (\email{jimmy\_kornelije.gunnarsson@math.lu.se}
\orcidlink{0009-0000-7610-6360}, \email{robertk@math.lu.se} \orcidlink{0000-0001-9664-0333}) }.
\and Robert Kl\"ofkorn\footnotemark[2].}

\usepackage{amsopn}

\makeindex             

\begin{document}

%
%
%
\maketitle

\begin{abstract}
We propose a structure-preserving discontinuous Galerkin scheme for the
Cahn--Hilliard equations with degenerate mobility based on the Symmetric Weighted
Interior Penalty  formulation. By evaluating the mobility at cell averages
rather than as a piecewise polynomial, the proposed scheme preserves strict
degeneracy and yields a coercivity constant that is independent of the mobility,
removing the need for regularisation. Moreover,
we establish existence of discrete solutions even with degeneracy via a Leray--Schauder fixed-point
argument, and show that the scheme satisfies a provable discrete maximum principle
at arbitrary polynomial order $\polyord$ when combined with the Zhang--Shu scaling limiter for $\polyord > 0$ and from the scheme alone for $\polyord = 0$.
Mass conservation and energy dissipation are established for the unlimited scheme;
for the limited variant, we discuss observed energy dissipation for $\polyord \geq 1$ and potential theoretical solutions. Numerical experiments confirm optimal convergence
rates of order $\polyord+1$ in $L^2$ and validate  structure-preserving
properties with numerical results.

\end{abstract}

\begin{keywords}
 Discrete Maximum Principle, Cahn--Hilliard,
 Interior Penalty, Scaling Limiter, Energy Stability, Discontinuous Galerkin,
  \dunefem
\end{keywords}

\begin{MSCcodes}
65M12, 65M60, 35K65
\end{MSCcodes}

\section{Introduction}
\label{sec:1}
Consider the Lipschitz domain $\Omega \subset \R^d$ for $d \in \{1,2,3\}$ with boundary
$\partial \Omega$ and time domain $[0,T]$ for some $T \in \R^+$; for convenience,
we write $\Omega_T := \Omega \times [0,T]$. The Cahn--Hilliard (CH) equations
for a phase-field $\pf: \Omega_T \to [-1,1]$ read:
\begin{eqnarray}\label{eq:CH}
\partial_t \pf &=& \Peclet^{-1}\nabla \cdot (\mob(\pf) \nabla \chem),
\label{eq:ch1}\\
\chem &=& W'(\pf) - \Cahn^2 \Delta \pf, \label{eq:ch2}
\end{eqnarray}
with initial and boundary conditions:
\begin{eqnarray}
\pf(\cdot,0) &=& \pf^0, \label{eq:ch3} \\
\nabla \pf \cdot \mathbf{n}|_{\partial \Omega} &=& 0, \label{eq:ch5a} \\
\mob(\pf) \nabla \chem \cdot \mathbf{n}|_{\partial \Omega} &=& 0, \label{eq:ch5}
\end{eqnarray}
where $\chem$ is the chemical potential, $\Peclet > 0$ is the P\'{e}clet
number, $\Cahn > 0$ is the Cahn number, $\mob(\pf) = \max\{1-\pf^2,0\}$ is the degenerate
mobility, and $W$ is a double-well potential. There are two popular choices for the
double-well potential, one is the quartic potential~\cite{Acosta:2021,Gunnarsson:20262, Gunnarsson:2026}:
\begin{equation}\label{eq:polynomial}
W(\pf) = \frac{1}{4}(\pf^2-1)^2,
\end{equation}
while the original formulation is the logarithmic potential~\cite{Elliott:2000},
\begin{equation}\label{eq:braggWilliams}
W(\pf) = \frac{\theta}{2}((1+\pf)\ln(1+\pf) + (1-\pf)\ln(1-\pf)) - \frac{\theta_c}{2}(1-\pf^2),
\end{equation}
for temperatures $\theta_c > \theta > 0$. For the logarithmic potential in Eq.~\eqref{eq:braggWilliams},
$\pf$ is confined to $(-1,1)$~\cite{Elliott:2000}, whereas for the quartic potential
in Eq.~\eqref{eq:polynomial}
the solution is not a priori bounded, since fourth-order equations do not admit a
strong maximum
principle \cite{Elliott:2000}, \cite[Remark 3.5]{Eikelder:2024}
 (in contrast to second-order in space problems such as the heat equation \cite{Thomee:2006}
or the Allen--Cahn equation
\cite{Shen:2010}). Consequently, discrete formulations of the CH equations demand
carefully constructed numerical schemes that enforce a weak maximum principle for
quartic potentials~\cite{Gunnarsson:2026} in the hope of satisfying a weak maximum principle
as proven in~\cite[Theorem 1]{Elliott:2000}, namely for the quartic potential in Eq.~\eqref{eq:polynomial}
alongside the mobility $M$ for $\pf^0 \in H^1(\Omega)$ with $|\pf^0| \leq 1$ a.e.\
in $\Omega$ then $|\pf(\cdot, t)| \leq 1$ a.e.\ in $\Omega \times [0,T]$. Further analysis
in the weak case has also been carried out in \cite{dai:2016}.
Constructing provable bound-preserving schemes is, in general, a highly nontrivial
task, as highlighted in~\cite[Remark 3.5]{Eikelder:2024} and references therein.
As demonstrated in~\cite{Acosta:2021,frank:2018,Gunnarsson:2026} the theorem in \cite[Theorem
1]{Elliott:2000} does not translate directly to standard discretization methods such
as the Finite Element Method (\fem) and the Symmetric Interior Penalty Galerkin (\sipg)
discretization, both of which violate a discrete maximum principle in numerical simulations.
The limited variants \femL and \sipgL introduced in~\cite{Gunnarsson:2026} restore
boundedness in numerical experiments, yet neither is supported by a formal proof.
For a discrete maximum principle,
only recently have rigorous results emerged, either through upwind flux design~\cite{Acosta:2021}
or through limiter-based approaches~\cite{Frank:2020,Liu:2024} for the quartic case.
Maximum principle schemes have also been developed for the logarithmic potential~\cite{Chen:2019}.
We also note that the analysis in~\cite{Elliott:2000} does not account for advection;
this has since been addressed at the discrete level in~\cite{Acosta:2021} by means
of upwinding and piecewise constant basis functions, and we discuss this extension
in Remark~\ref{rem:convectiveBoundedness}. \par
A key limitation of, for instance, the \femL, \swipL and \sipgL schemes in \cite{Gunnarsson:2026}
is that their fluxes across cell intersections do not readily ensure a discrete maximum
principle for the cell averages. In particular, the Zhang--Shu scaling limiter employed
in \cite{Gunnarsson:20262, Gunnarsson:2026} (and as a second step in \cite{Liu:2024})
requires that the cell averages themselves satisfy a discrete maximum principle.
This motivates the use of the Symmetric Weighted Interior
Penalty (\swip) formulation, which strictly imposes zero flux on intersections
where the cell average attains a global maximum or minimum by exploiting the degeneracy
of the mobility $M$. The \swip
discretization was originally discussed in \cite{Burman:2006} for isotropic diffusion
and subsequently generalized to the anisotropic setting in \cite{Ern:2008}. A key
assumption in \cite{Ern:2008} is to consider a piecewise constant diffusion tensor.
Although the \swip discretization was briefly discussed in \cite{frank:2018} and
for the first time applied to the CH equations in \cite{Gunnarsson:20262, Gunnarsson:2026},
the prescribed formulations in the latter employ a piecewise polynomial mobility evaluation
that necessitates a regularisation assumption. This requirement can be circumvented
by evaluating the mobility at the cell averages, thereby permitting strict piecewise
degeneracy. As demonstrated in Theorem~\ref{thm:boundedVhk},
this modification enables a provable discrete maximum principle at the cell-average
level, and thus, also for the phase-field approximation following a scaling limiter
discussed in Section \ref{sec:scalinglimiter}.
%
In the present work, we propose a novel DG formulation -- the \swip Diffusive Projection (\swipDP) scheme combined
with the Zhang--Shu scaling limiter (\swipDPL) -- that provably satisfies a discrete
maximum principle for the CH equations at arbitrary polynomial order (Theorem~\ref{thm:boundedVhk}).
We establish existence of solutions for the unlimited \swipDP scheme and, by virtue
of an energy estimate, for the limited variant as well. This guarantee comes at the
expense of not necessarily ensuring monotone energy dissipation for $\polyord > 0$,
although this property is consistently observed in numerical experiments. The resulting
scheme furnishes a robust framework for simulating the CH equations, including advective
extensions, and can be naturally incorporated into coupled formulations as in \cite{Gunnarsson:20262,Gunnarsson:2026}.

\section{Discretization}\label{sec:discretization}
In this section, we present the spatial and temporal discretization of the CH equation
system. The primary objective is to motivate the design of the \swipDPL scheme and
to establish
coercivity of the resulting trilinear form, boundedness, and
energy dissipation. The overall coercivity proof and the remaining details follow
a similar procedure to that in \cite{Gunnarsson:2026}.
\subsection{Notation} \label{sec:notation}
 Let the spatial domain $\ome$ be partitioned into a union of $M$ non-intersecting
elements $\elem$ forming a mesh $\grid = \cup_{i = 1}^M \elem_i$. Then we denote
by $\Gamma_i$ with unit normal $\mb{n}$ the set of all intersections between
two
elements of the grid $\grid$, and the set of all
intersections, also with the boundary of the domain $\Omega$, is denoted by
$\Gamma$.
For an intersection $\isec \in \Gamma$ we denote the adjacent elements with $\elemin_\isec$
and
$\elemout_\isec$ ($\elemin_\isec = \elemout_\isec$ for $\isec \in \Gamma \cap \Gamma_i$)
and
define $h_{\isec}$ as the harmonic mean of the
neighboring element areas divided by the edge length:
\begin{equation}
  h_\isec := \frac{2|\elemout_\isec| |\elemin_\isec|}{|\isec|(|\elemin_\isec| + |\elemout_\isec|)} \quad \forall \isec \in \Gamma.
\end{equation}

The global mesh width is then defined as $h = \max_{\isec \in \Gamma} h_\isec$.
Moreover, we make a regularity assumption on the mesh $\grid$ in the sense that we
will only consider meshes which are shape-regular.
Such regularity is achieved, for instance, by quadrilaterals and right-angled isosceles
triangles for $\Omega \subset \R^2$ and by hexahedra and right-angled isosceles tetrahedra
for
$\Omega \subset \R^3$. However, the theorems we present hold for general meshes with minor modifications.
For the time discretization we consider a uniform partition of the time interval
$[0,T]$ with time increment $\dt = \frac{T}{N}$ for some $N \in \mathbb{N}$ and
 denote the time levels by $t_n = n \cdot \dt$ for $n = 0,1,\ldots,N$. For brevity,
we also denote time-dependent variables by a subscript $n$, i.e., $\pf^n =
\pf(\cdot, t_n)$.

Following standard \fem notation, we consider a general-order \fem formulation
for the space of trial and test functions:
\begin{equation}
V_h^\polyord = \{ \varphi \in L^2(\grid) : \varphi|_\elem \in \Po^\polyord(\elem),
\forall \elem
\in \grid
\},
\end{equation}
where $\Po^\polyord(\elem)$ denotes a polynomial space of order at most $\polyord$
on the
element $\elem$. Note that this makes $V_h^\polyord$ a broken polynomial space over
the grid $\grid$. We also introduce the inner product $\innerProd{\cdot}{\cdot}_\elem$
for $\elem \in \grid$ and $\innerProd{\cdot}{\cdot}_\grid$ as
\begin{equation}
  \innerProd{\varphi}{\phi}_\elem = \int_\elem \varphi \phi \, d\ics, \qquad \innerProd{\varphi}{\phi}_\grid
= \sum_{\elem \in \grid} \innerProd{\varphi}{\phi}_\elem,
\end{equation}
for scalar functions $\varphi, \phi \in V_h^\polyord$,
inducing the $L^2$-norm $\innerProd{\varphi}{\varphi}_\elem = \norm{ \varphi }_{L^2(\elem)}^2$.
For brevity, we omit the subscript $\grid$ in the inner product when it is clear
from context.
 We introduce the
operators $\vjump{\cdot}_{\isec}, \aver{\cdot}_{\isec}$, and
$\haver{\cdot}_{\isec}$ for
 $\isec \in \Gamma_i$ as
\begin{equation}
  \begin{split}
    \vjump{\varphi}_{\isec} = \varphiminus  - \varphiplus, \qquad
    \aver{\varphi}_{\isec} = \frac{1}{2}\left(\varphiminus+\varphiplus\right), \qquad
    \haver{\varphi}_{\isec} = \frac{2\, \varphiplus \, \varphiminus}{\varphiplus
+ \varphiminus},
\\
  \end{split}
\end{equation}
for some $\varphi$, where $\vjump{\cdot}_{\isec}$, $\aver{\cdot}_{\isec}$, and $\haver{\cdot}_{\isec}$
denote the jump, arithmetic average,
and harmonic average across intersection $\isec$, respectively. For brevity, we drop
the subscript $\isec$ for those and adopt the notation $\varphi^\pm := \varphi_{|\elem_\isec^\pm}$.
Moreover, we introduce the notation $\varphi_\oplus := \max{\{0,\varphi\}}$
and $\varphi_\ominus := \min{\{0, \varphi\}}$ to denote the positive and negative
parts of a function, respectively. This notation is employed in particular for
upwinding.
Finally, we introduce the following notation for the set of
 functions in $L^\infty(\grid)$ that take values in $[-1,1]$ a.e.\ in $\grid$:
\begin{equation}
  L_1^\infty(\grid) := \{ \varphi \in L^\infty(\grid) : \norm{\varphi}_{L^\infty(\grid)}
\leq 1 \},
\end{equation}
and denote set membership in $L_1^\infty(\grid)$ by the following notation:
\begin{equation}
  \norm{\varphi}_{L_1^\infty(\grid)} = \begin{cases}
  \norm{\varphi}_{L^\infty(\grid)} & \text{if } \varphi \in L_1^\infty(\grid), \\
  \infty & \text{otherwise},
  \end{cases}
\end{equation}
and we say that $\varphi$ is bounded in $L_1^\infty(\grid)$
if $\norm{\varphi}_{L_1^\infty(\grid)} < \infty$.
\subsection{Discontinuous Galerkin}
We proceed by constructing a general formulation for the spatial discretization that
is suitable for practical implementation.
\begin{definition}[Cell--averaged projection]\label{def:cellavgproj}
Consider $\varphi \in V_h^\polyord$ and $\elem \in \grid$. We define the cell--averaged
projections $\Pi_\elem^0: \Po^\polyord(\elem) \to \Po^0(\elem)$ and $\Pi_\grid^0:
V_h^\polyord \to V_h^0$ as
\begin{equation}
  \Pi_\elem^0 \varphi = |\elem|^{-1} \int_\elem \varphi \, d\ics, \qquad \Pi_\grid^0
\varphi(\ics) = \sum_{\elem \in \grid} \Pi_\elem^0 \varphi \, \chi_\elem(\mbx),
\end{equation}
where
\begin{equation}
  \chi_\elem(\mbx) := \begin{cases}
  1 & \text{if } \mbx \in \elem, \\
  0 & \text{otherwise}.
  \end{cases}
\end{equation}
\end{definition}
\begin{lemma}[Orthogonality]\label{lem:orthogonality}
Suppose that we have an orthogonal basis $\{\varphi_j\}_{j = 0}^N$ with respect to
the L$^2$ inner product for $\Po^\polyord(\elem)$ for each $\elem \in \grid$ such
that $\innerProd{\varphi_i}{\varphi_j}_\elem = \delta_{ij} |\elem|$ and $\varphi_0
= 1$. Then for any $\phi \in \Po^\polyord(\elem)$ such that $\phi = \sum_{j = 0}^N
c_j \varphi_j$ we have that $\Pi_\elem^0 \phi = c_0$ and $\innerProd{\phi - \Pi_\elem^0
\phi}{\varphi_0}_\elem = 0$.
\end{lemma}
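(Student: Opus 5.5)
The argument is a direct computation from Definition~\ref{def:cellavgproj}, using linearity of the integral and the orthogonality relations $\innerProd{\varphi_i}{\varphi_j}_\elem = \delta_{ij}|\elem|$ with $\varphi_0 = 1$.

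First I will show $\Pi_\elem^0 \phi = c_0$. Write $\phi = \sum_{j=0}^N c_j \varphi_j$ and note that $\int_\elem \varphi_j \, d\ics = \innerProd{\varphi_j}{1}_\elem = \innerProd{\varphi_j}{\varphi_0}_\elem = \delta_{j0}|\elem|$. Then
\begin{equation*}
  \Pi_\elem^0 \phi = |\elem|^{-1} \sum_{j=0}^N c_j \innerProd{\varphi_j}{\varphi_0}_\elem = |\elem|^{-1} c_0 |\elem| = c_0 .
\end{equation*}

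For the second claim, $\Pi_\elem^0\phi = c_0 = c_0\varphi_0$ is a constant, so $\phi - \Pi_\elem^0\phi = \sum_{j=1}^N c_j\varphi_j$. Each term is orthogonal to $\varphi_0$ because $j \neq 0$, hence
\begin{equation*}
  \innerProd{\phi - \Pi_\elem^0\phi}{\varphi_0}_\elem = \sum_{j=1}^N c_j \innerProd{\varphi_j}{\varphi_0}_\elem = 0 .
\end{equation*}
Alternatively, one can compute directly: $\innerProd{\phi}{1}_\elem - c_0|\elem| = c_0|\elem| - c_0|\elem| = 0$.

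There is no real obstacle here. The only point needing care is the normalization: the condition $\innerProd{\varphi_0}{\varphi_0}_\elem = |\elem|$ is consistent with $\varphi_0 = 1$, and it is exactly this normalization that makes the mean equal $c_0$ rather than a rescaled multiple of it.
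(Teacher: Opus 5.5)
Your proof is correct and follows the paper's argument: it expands $\phi$ in the basis and uses $\innerProd{\varphi_j}{\varphi_0}_\elem = \delta_{j0}|\elem|$ to obtain $\Pi_\elem^0\phi = c_0$. Your explicit check that $\phi - \Pi_\elem^0\phi = \sum_{j\ge 1} c_j\varphi_j$ is orthogonal to $\varphi_0$ fills in the step the paper dismisses as following ``immediately from the orthogonality of the basis.''
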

\begin{proof}
We prove the first statement. By definition,
\begin{equation}
  \Pi_\elem^0 \phi = |\elem|^{-1} \int_\elem \sum_{j = 0}^N c_j \varphi_j \, d\ics
= |\elem|^{-1}  \sum_{j = 0}^N c_j \innerProd{\varphi_j}{\varphi_0}_\elem = c_0.
\end{equation}
The second statement follows immediately from the orthogonality of the basis.
\end{proof}
\begin{remark}[Zero-mean decomposition]
  As a consequence of Lemma \ref{lem:orthogonality}, any $\varphi \in \Po^\polyord(\elem)$
can be decomposed into a cell-averaged part $\pfmean = \Pi_\elem^0 \varphi$ and a
zero-mean part $\tilde{\varphi}_\elem = \varphi_\elem - \pfmean$. More generally,
we write $\tilde{\varphi} = \sum_{\elem \in \grid} \tilde{\varphi}_\elem \chi_\elem$
and note that $\tilde{\varphi} \in V_h^\polyord \setminus V_h^0$ for $\polyord >
0$; recall that $V_h^\polyord$ is a broken polynomial space since continuity across
cell interfaces is not imposed.
\end{remark}

Let $\pf_h \in V_h^\polyord$ denote a fixed discrete phase-field. Following the formulation
in
\cite{Ern:2008}, we introduce a piecewise constant mobility $\mob(\proj_\grid^0 \pf_h)$
that serves as an isotropic piecewise degenerate diffusion coefficient. Note that
$\mob(\pfmean) \in \Po^0(\elem)$
and $\mob(\Pi_\grid^0 \pf_h) \in L^\infty(\grid)$ by definition. In the spirit of
the \swip formulations in \cite{Gunnarsson:20262, Gunnarsson:2026}, we introduce
the trilinear form $b: L^\infty(\grid) \times V_h^\polyord \times V_h^\polyord \to
\R$:
\begin{eqnarray}
b(\mob(\Pi_\grid^0 \pf_h), \chem_h, \vp) &=& \int_\grid \mob(\Pi_\grid^0 \pf_h)\,
\nabla \chem_h \cdot \nabla \vp\, d\ics \notag \\
    &+& \sum_{\isec \in \Gamma} \int_\isec \haver{\mob(\Pi_\grid^0 \pf_h)}\left(
        \frac{\pena }{h_\isec} \vjump{\chem_h} \vjump{\vp}
    - \aver{\nabla \chem_h} \vjump{\vp} - \aver{\nabla \vp} \vjump{\chem_h}\right)
     ds,
    \label{eq:diffmob}
\end{eqnarray}
where $\pena > 0$ is a penalty parameter over $\isec \in \Gamma_i$
which we establish in Theorem~\ref{thm:coercivity}. Moreover, the flux $\haver{\proj_\grid^0 \pf_h}$
follows from choosing the weights $w_\isec^\pm = \frac{\mob(\pfmeanisecpm)}{\mob(\pfmeanisecp)
+ \mob(\pfmeanisecm)}$ using \swip, as suggested in \cite{Ern:2008} for treating
a diffusion-type formulation
for DG. \par

Coercivity of $\tilde{b}(\cdot, \cdot) = b(\mob(\Pi_\grid^0 \pf_h), \cdot, \cdot)$ for a fixed
$\pf_h$ can only be established over the set
\begin{equation}\label{eq:elliptic}
  \tilde{\grid} := \grid \setminus \{\elem: \mob(\pfmean) = 0\} = \{\elem \in \grid:
\mob(\pfmean) > 0\},
\end{equation}
and, without loss of generality, we assume that $\tilde{\grid} \neq \emptyset$. This
is a natural condition required to obtain meaningful bounds for the corresponding
constants.
\begin{theorem}[Coercivity] \label{thm:coercivity}
Let $\pf_h \in V_h^\polyord$ be a fixed discrete phase-field and suppose that there
exists at least one intersection $e$ with $|\pfmeanisecpm| < 1$, so that $\tilde{\grid}
\neq \emptyset$. Then the bilinear form $\tilde{b}(\cdot, \cdot) := b(\mob(\pf_h),
\cdot, \cdot)$
 defined in Eq.~\eqref{eq:diffmob}
is coercive over the space
\begin{equation}
  \tilde{V}_h^\polyord = \{\varphi \in L^2(\tilde{\grid}): \varphi|_\elem \in \Po^\polyord(\elem),
\forall \elem \in \tilde{\grid} \} \setminus \{1\},
\end{equation}
provided that the
 penalty parameter $\pena
> 0$ is chosen sufficiently large, independently of the mobility $\mob(\Pi_\grid^0 \pf_h)$.
\end{theorem}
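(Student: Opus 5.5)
We follow the standard SWIP coercivity argument of \cite{Ern:2008,Burman:2006}, exploiting that the mobility $\mob(\Pi_\grid^0\pf_h)$ is piecewise constant. We set $\mob_\elem := \mob(\pfmean) > 0$ for $\elem \in \tilde{\grid}$ and work in the DG energy norm
\begin{equation*}
  \norm{\varphi}_{\mob}^2 := \sum_{\elem\in\tilde{\grid}} \mob_\elem \norm{\nabla\varphi}_{L^2(\elem)}^2 + \sum_{\isec\in\Gamma_i} \frac{\haver{\mob}}{h_\isec}\norm{\vjump{\varphi}}_{L^2(\isec)}^2,
\end{equation*}
where intersections with a degenerate neighbour have $\haver{\mob}=0$ and drop out. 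Restricting to $\tilde{\grid}$ and excluding constants (the set $\{1\}$ in the statement) makes this a norm, since it vanishes only for functions that are constant on each connected component of $\tilde{\grid}$. The goal is $\tilde b(\varphi,\varphi) \geq C\norm{\varphi}_{\mob}^2$ with $C$ and the threshold on $\pena$ independent of the values $\mob_\elem$.

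First, we expand $\tilde b(\varphi,\varphi) = \sum_\elem \mob_\elem\norm{\nabla\varphi}_{L^2(\elem)}^2 + \sum_\isec \int_\isec \haver{\mob}\bigl(\tfrac{\pena}{h_\isec}\vjump{\varphi}^2 - 2\aver{\nabla\varphi}\vjump{\varphi}\bigr)\,ds$. The only indefinite term is the consistency term. We rewrite it with the SWIP weights: since $w^\pm_\isec = \mob^\mp/(\mob^++\mob^-)$, we have $\haver{\mob} w^\pm_\isec$ bounded by $\mob^\pm$, and more precisely $\haver{\mob}\le 2\min(\mob^+,\mob^-)$. (With plain averages we would instead split $\haver{\mob}\aver{\nabla\varphi} = \tfrac12\haver{\mob}(\nabla\varphi^++\nabla\varphi^-)$ and use $\haver{\mob}\le 2\mob^\pm$.) Each one-sided trace then pairs $\nabla\varphi^\pm$ with the mobility of its own cell:
\begin{equation*}
  \Bigl|\int_\isec \haver{\mob}\,\aver{\nabla\varphi}\vjump{\varphi}\,ds\Bigr| \leq \sum_\pm \bigl(\haver{\mob}\,\mob^\pm\bigr)^{1/2}\norm{\nabla\varphi^\pm}_{L^2(\isec)}\,\norm{\vjump{\varphi}}_{L^2(\isec)}.
\end{equation*}
This is the key point where the ratio $\mob^+/\mob^-$, which could be arbitrarily large or small near degeneracy, must not enter. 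The harmonic mean handles this automatically because $\haver{\mob}\le 2\min(\mob^+,\mob^-)$.

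Next, we apply the discrete trace inverse inequality on the shape-regular mesh, $\norm{\nabla\varphi^\pm}_{L^2(\isec)}\le C_{\mathrm{tr}}\, h_\isec^{-1/2}\norm{\nabla\varphi}_{L^2(\elem^\pm_\isec)}$, with $C_{\mathrm{tr}}$ depending only on $\polyord$, $d$ and mesh regularity (using that $h_\isec$ is comparable to the local cell sizes). Then Young's inequality with parameter $\epsilon$ gives
\begin{equation*}
  2\,\Bigl|\sum_\isec\int_\isec \haver{\mob}\,\aver{\nabla\varphi}\vjump{\varphi}\,ds\Bigr| \leq \epsilon \sum_\elem \mob_\elem\norm{\nabla\varphi}_{L^2(\elem)}^2 + \frac{C_{\mathrm{tr}}^2 N_\partial}{\epsilon}\sum_\isec \frac{\haver{\mob}}{h_\isec}\norm{\vjump{\varphi}}_{L^2(\isec)}^2,
\end{equation*}
where $N_\partial$ is the maximal number of faces per element. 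Taking $\epsilon=\tfrac12$ and $\pena \ge \tfrac12 + 2C_{\mathrm{tr}}^2N_\partial$ yields $\tilde b(\varphi,\varphi)\ge \tfrac12\norm{\varphi}_{\mob}^2$, and this threshold contains no mobility values.

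The main obstacle is the bookkeeping at interfaces between $\tilde{\grid}$ and the degenerate cells, and the behaviour of the harmonic mean there. When $\mob^+ + \mob^- = 0$, $\haver{\mob}$ is $0/0$, so we first fix the convention $\haver{\mob}=0$ by continuity. We must then check that every remaining face term involves only cells in $\tilde{\grid}$; this is what justifies posing coercivity on $\tilde V_h^\polyord$. Finally, we verify that $\norm{\cdot}_{\mob}$ is definite there; the assumption of at least one intersection with $|\pfmeanisecpm|<1$ ensures $\tilde{\grid}\neq\emptyset$ and that the norm is nontrivial.
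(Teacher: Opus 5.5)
Your argument is correct and is essentially the paper's proof. The paper also bounds the consistency term by Cauchy--Schwarz and Young, splits $\aver{\nabla\chem_h\cdot\mathbf{n}^+}$ into one-sided traces, applies the discrete trace inequality, and uses $\haver{\mob}\le 2\mob^\pm$ with the exponent choice $\beta = 1/2$, which is exactly your pairing $(\haver{\mob}\,\mob^\pm)^{1/2}$; in both versions the penalty threshold contains no mobility values (the paper's $\pena \ge m_{\elem_\isec} C_T$ with a face-local $\epsilon_\isec$, your global $\epsilon$ with $N_\partial$). Your only overreach is the closing claim that $\norm{\cdot}_{\mob}$ is definite on $\tilde{V}_h^\polyord$, which fails when the non-degenerate cells split into several face-connected components, but the paper likewise only proves the bound against the weighted DG semi-norm.
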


\begin{proof}
We consider the case with $p > 0$. We introduce the weighted DG semi-norm:
\begin{equation}\label{eq:dgnorm}
  \norm{\chem_h }^2_{DG} := \norm{ \sqrt{\mob(\Pi_\grid^0 \pf_h)} \nabla \chem_h
}_{L^2(\grid)}^2
+ \sum_{\isec \in \Gamma_i} \int_\isec \frac{\haver{\mob(\Pi_\grid^0 \pf_h)}}{h_\isec}
\vjump{\chem_h}^2
ds,
\end{equation}
and consider the inequality:
\begin{equation}
  \tilde{b}(\chem_h, \chem_h) \geq \tilde{C} \norm{ \chem_h }^2_{DG}, \quad \forall
\chem_h \in \tilde{V}_h^\polyord,
\end{equation}
for some $\tilde{C} > 0$. For the trilinear form Eq.~\eqref{eq:diffmob} we have:
\begin{align*}
\tilde{b}(\chem_h, \chem_h) &= \norm{\sqrt{\mob(\Pi_\grid^0 \pf_h)} \nabla \chem_h}_{L^2(\grid)}^2
+ \sum_{\isec \in \Gamma_i} \int_\isec \frac{\pena \haver{\mob(\Pi_\grid^0 \pf_h)}}{h_\isec}\vjump{\chem_h}^2
ds \\
&\quad - \sum_{\isec \in \Gamma_i} \int_\isec 2\haver{\mob(\Pi_\grid^0 \pf_h)} \aver{\nabla
\chem_h
\cdot \mathbf{n}^+}\vjump{\chem_h}ds.
\end{align*}
First, we bound the cross term $\int_\isec \haver{\mob(\Pi_\grid^0 \pf_h)} \aver{\nabla
\chem_h \cdot
\mathbf{n}^+} \vjump{\chem_h} ds$ in absolute value. Applying the triangle inequality,
we obtain:
\begin{equation} \label{eq:cauchySchwarz}
  \left|\sum_{\isec \in \Gamma_i}\int_\isec \haver{\mob(\Pi_\grid^0 \pf_h)} \aver{\nabla
\chem_h
\cdot \mathbf{n}^+}
\vjump{\chem_h} ds\right| \leq \sum_{\isec \in \Gamma_i} \left|\int_\isec \haver{\mob(\Pi_\grid^0
\pf_h)}
\aver{\nabla \chem_h \cdot \mathbf{n}^+} \vjump{\chem_h} ds\right|,
\end{equation}
and an application of the Cauchy-Schwarz inequality gives for $\coerexp \in [0,1]$:
\begin{eqnarray}
  &|&\int_\isec \haver{\mob(\Pi_\grid^0 \pf_h)} \aver{\nabla \chem_h \cdot \mathbf{n}^+} \vjump{\chem_h}
ds| \\ &\leq& \norm{ \haver{\mob(\Pi_\grid^0 \pf_h)}^{1-\coerexp} \aver{\nabla \chem_h
\cdot \mathbf{n}^+}}_{L^2(\isec)}
\norm{\haver{\mob(\Pi_\grid^0 \pf_h)}^\coerexp \vjump{\chem_h}}_{L^2(\isec)},
\end{eqnarray}
for every $\isec \in \Gamma_i$. Then, by Young's
 inequality, we obtain the intersection-wise estimate:
\begin{align*}
  &2\norm{ \haver{\mob(\Pi_\grid^0 \pf_h)}^{1-\coerexp} \aver{\nabla \chem_h \cdot \mathbf{n}^+}}_{L^2(\isec)}
\norm{\haver{\mob(\Pi_\grid^0 \pf_h)}^\coerexp \vjump{\chem_h}}_{L^2(\isec)} \\
&\leq h_\isec\epsilon_\isec \norm{\haver{\mob(\Pi_\grid^0 \pf_h)}^{1- \coerexp} \aver{\nabla
\chem_h \cdot
\mathbf{n}^+}}_{L^2(\isec)}^2 + \frac{1}{\epsilon_\isec h_\isec} \norm{\haver{\mob(\Pi_\grid^0
\pf_h)}^\coerexp
\vjump{\chem_h}}_{L^2(\isec)}^2,
\end{align*}
for an arbitrary $\epsilon_\isec > 0$ on each intersection
 $\isec \in \Gamma_i$. Then we seperate out the mobility
\begin{equation}
  \norm{\haver{\mob(\Pi_\grid^0 \pf_h)}^{1-\coerexp} \aver{\nabla \chem_h \cdot \mathbf{n}^+}}_{L^2(\isec)}^2
= \haver{\mob(\Pi_\grid^0 \pf_h)}^{2(1-\coerexp)}\norm{\aver{\nabla \chem_h
\cdot \mathbf{n}^+}}_{L^2(\isec)}^2,
\end{equation}
and expanding the average gives:
\begin{equation}
  \norm{\aver{\nabla \chem_h \cdot \mathbf{n}^+}}_{L^2(\isec)}^2  \leq \frac{1}{2} \left(
\norm{\nabla \chem_h^+ \cdot \mathbf{n}^+}_{L^2(\isec)}^2 + \norm{\nabla \chem_h^- \cdot
\mathbf{n}^+}_{L^2(\isec)}^2 \right)
\end{equation}
The trace inequality with $C_T := \frac{k(k+d-1)}{d}$ \cite{Ainsworth:2009,Riviere:2008}
then gives:
\begin{equation*}
  \norm{\nabla \chem_h^\pm \cdot \mathbf{n}^+}_{L^2(\isec)}^2 \leq \frac{C_T}{h_\isec}
\norm{\nabla \chem_h}_{L^2(\elem_\isec^\pm)}^2.
\end{equation*}
To recover the weighted volumetric term, we observe that since $\mob(\Pi_\grid^0
\pf_h) > 0$ for $\elem \in \tilde{\grid}$ we have that
\begin{equation}
  \norm{\nabla \chem_h}_{L^2(\elem)}^2 = \frac{\norm{\sqrt{\mob(\Pi_\grid^0 \pf_h)} \nabla
\chem_h}_{L^2(\elem)}^2}{\mob(\Pi_\grid^0 \pf_h)}.
\end{equation}
We note that $\haver{M(\pfmeanisecpm)} \leq 2M(\pfmeanisecpm)$ for $\elem \in \grid$,
which yields the following limit
\begin{equation}
\lim_{|\pfmeanisecpm| \to 1}\frac{\haver{M(\pfmeanisecpm)}^{2(1-\coerexp)}}{M(\overline{\pf_{\elem_\isec^\pm}})}
\leq \lim_{|\pfmeanisecpm| \to 1} 2M(\pfmeanisecpm)^{1-2\coerexp},
\end{equation}
This constrains our choices to $\coerexp \in [0,\frac{1}{2}]$ for consistency,
while the limit $|\pfmean| \to 1$ removes the restriction to $\tilde{\grid}$.
For simplicity, we proceed with $\coerexp = \frac{1}{2}$,
which removes the explicit dependence on the mobility $M(\cdot)$ in the remaining
terms.
Since $\mob(\Pi_\grid^0 \pf_h) > 0$ for $\elem \in \tilde{\grid}$
and the contributions from $\grid' = \{K: \mob(\pfmean) = 0, \elem \in \grid\}$ vanish,
we sum over $\elem \in \grid$ without loss of generality.
For the DG semi-norm in Eq.~\eqref{eq:dgnorm}, we obtain
 the following volumetric term estimate:
\begin{equation}\label{eq:volumeineq}
\norm{\sqrt{\mob(\Pi_\grid^0 \pf_h)} \nabla \chem_h}_{L^2(\elem)}^2\left(1 -  \sum_{\isec
\in
\partial \elem \cap\Gamma_i} C_T \epsilon_\isec \right)
\geq 0 \qquad \forall \elem \in \grid.
\end{equation}
The inequality in Eq.~\eqref{eq:volumeineq} is trivially satisfied for $\elem \in
\grid'$. For $\elem \in \tilde{\grid}$, it is
strictly positive provided $\epsilon_\isec =\frac{1}{2 m_{\elem_\isec} C_T}$, where
$m_{\elem_\isec} = \sum_{\isec \in \partial \elem \cap \Gamma_i} 1$. This choice
yields
for the penalty term:
\begin{equation}
\int_\isec \left( \frac{\pena \haver{\mob(\Pi_\grid^0 \pf_h)}}{h_\isec} - \frac{\haver{\mob(\Pi_\grid^0
\pf_h)}^{2\coerexp}}{2\epsilon_\isec
h_\isec} \right) \vjump{\chem_h}^2 ds \geq 0,
\end{equation}
with $\coerexp = \frac{1}{2}$ as established above. Since there exists an intersection
$e$ with $|\pfmeanisecpm| \neq 1$ by assumption, we have $\haver{\mob(\Pi_\grid^0
\pf_h)} > 0$ for at least one intersection $\isec$, and we choose $\pena$ such
that
\begin{equation}
  \pena \geq \frac{1}{ 2\epsilon_\isec}
= m_{\elem_\isec} C_T,
\end{equation}
in agreement with standard estimates for the penalty parameter (see \cite{Ainsworth:2009,Riviere:2008,Epshteyn:2007}).
There then exist two positive constants $C_1, C_2 > 0$ such that
\begin{equation}\label{eq:DGbounds}
  b(\mob(\pf_h), \chem_h, \chem_h) \geq C_1 \norm{\sqrt{\mob(\Pi_\grid^0 \pf_h)} \nabla
\chem_h}_{L^2(\grid)}^2
+ C_2 \sum_{\isec \in \Gamma_i} \int_\isec \frac{\pena \haver{\mob(\Pi_\grid^0 \pf_h)}}{h_\isec}
\vjump{\chem_h}^2
ds,
\end{equation}
with $C_1 = \frac{1}{2}$ and $C_2 = \underset{e \in \Gamma_i}{\max}\{\eta_\isec - m_{\elem_\isec} C_T\}$ and $\tilde{C} = \max\{C_1, C_2\}$ we arrive at
\begin{equation}\label{eq:finalDG}
  b(\mob(\pf_h), \chem_h, \chem_h) \geq \tilde{C} \norm{\chem_h}^2_{DG}, \quad \forall
\chem_h \in \tilde{V}_h^\polyord,
\end{equation}
which completes the proof.
\end{proof}
The significance of Theorem \ref{thm:coercivity} becomes evident in the derivation
of energy dissipation for the \swipDP scheme in Theorem \ref{thm:energyDPL}, which
requires that $b(\mob(\Pi_\grid^0\pf_h), \chem_h, \chem_h)$ be semi-positive
for $\chem_h, \pf_h \in V_h^\polyord$. We note that this follows likewise
from Eq.\eqref{eq:finalDG} since, due to loss of coercivity over $\tilde{\grid}'$,
the inequality holds by taking $\eta = \eta_\isec$ large enough (which is found with $\tilde{\grid} = \grid$).  In particular, since the constants in Eq.~\eqref{eq:DGbounds}
can be precomputed independently of the data by virtue of Theorem \ref{thm:coercivity},
the resulting estimate is robust owing to the weights. Moreover, as discussed in
\cite[Remark  3.4]{Gunnarsson:2026}, this formulation avoids the need to estimate
unknown ratios and ambiguities arising in \cite{Gunnarsson:20262, Gunnarsson:2026}
in the choice of the penalty parameter $\pena$.
\begin{remark}[Semi-positivity of $\tilde{b}(\cdot,\cdot)$ over $V_h^0$]\label{rem:coercivityP0}
For the lowest polynomial order case, $\polyord = 0$, we have $\nabla \chem_h = 0$ on each element
$\elem \in \grid$. Consequently, the gradient terms vanish and the DG semi-norm in
Eq.~\eqref{eq:finalDG} reduces to
\begin{equation}
  \tilde{b}(\chem_h, \chem_h) = \|\chem_h\|_{DG}^2 = \sum_{\isec \in \Gamma_i} \int_\isec
\frac{\haver{\mob(\Pi_\grid^0 \pf_h)}}{h_\isec} \vjump{\chem_h}^2 \, ds, \quad \forall
\chem_h \in V_h^0,
\end{equation}
which vanishes for any globally constant $C_R \in V_h^0 \cap \R$ and locally whenever
$M(\proj_{\grid}^0 \pf_h) = 0$.
\end{remark}
Analogously to Eq.~\eqref{eq:diffmob}, we introduce the
bilinear form $a: V_h^\polyord \times V_h^\polyord \to \mathbb{R}$ for the discretization
of the Laplacian operator in Eq.~\eqref{eq:ch2}:
\begin{eqnarray}
a(\pf_h, \vp) &=
    \int_\grid \nabla \pf_h \cdot \nabla \vp\, dx
    + \sum_{\isec \in \Gamma} \int_\isec
        \frac{\pena^\Delta}{h_\isec} \vjump{\pf_h} \vjump{\vp} \notag \\
        &- \aver{\nabla \pf_h \cdot \nbold} \vjump{\vp}
        -  \aver{\nabla \vp \cdot \nbold} \vjump{\pf_h}
     ds,
    \label{eq:laplace}
\end{eqnarray}
where $\pena^\Delta \geq \pena$ is a local penalty parameter for the
Laplacian operator to assure coercivity (following any standard \sipg formulation
in for instance \cite{Ainsworth:2009,Riviere:2008}). Note that $a(\cdot, \cdot) =
b(1, \cdot, \cdot)$, which reduces directly to a \sipg formulation; consequently,
coercivity of $a$ follows by an analogous argument to that for $b$. The superscript
$\Delta$ distinguishes the penalty
parameter for the Laplacian from that of the mobility term in Eq.~\eqref{eq:diffmob};
the two need not be equal, as discussed in \cite{Gunnarsson:20262, Gunnarsson:2026}. However, we will proceed with $\pena^\Delta = \pena = \eta$ with $\eta$ sufficiently large.


\section{Scheme}\label{sec:scheme}
We now introduce the time discretization of our proposed scheme. To handle the nonlinear
potential $W$, we employ the Eyre splitting~\cite{Eyre:1997}:
\begin{equation}\label{eq:eyre}
\Phi^+(\pf) = \pf_h^3, \quad \Phi^-(\pf) = -\pf_h,
\end{equation}
with $W'(\pf_h) = \Phi^+(\pf_h) + \Phi^-(\pf_h)$. The \swipDP scheme is to find $(\pf_h^{n+1}, \chem_h^{n+1}) \in V_h^\polyord \times V_h^\polyord$:
\begin{eqnarray}
  \frac{1}{\dt}\innerProd{\pf_h^{n+1}- \pf_h^{n}}{\vp} + \Peclet^{-1} b(\mob(\Pi_\grid^0
\pf_h^{n+1}), \chem_h^{n+1},
\vp) &=& 0,  \label{eq:ch1ndtDP} \\
  \innerProd{\chem_h^{n+1}}{\xi} - \innerProd{\Phi^+(\pf_h^{n+1}) + \Phi^-(\pf_h^{n})}{\xi}
- \Cahn^2 a (\pf_h^{n+1}, \xi)  &=& 0,
  \label{eq:ch2ndtDP}
\end{eqnarray}
for all $\xi, \vp \in V_h^\polyord$ as solutions at time $t = (n+1)\dt$. In the special case of $V_h^0$,
the \swipD scheme introduced in \cite{Gunnarsson:20262} and \swipDP coincide, since
$\Pi_\grid^0$ is the identity map
on $V_h^0$.

In what follows, we establish several fundamental properties of the \swipDP scheme
and demonstrate that any solution satisfies the corresponding physical constraints.
In particular, the energy dissipation established in Corollary \ref{cor:energyEstimate}
serves as an a~priori estimate for the existence result.
\begin{definition}[Phase-field mass]\label{def:mass}
We refer to the quantity
\begin{equation}
  m_{\pf_h} = \frac{1}{|\grid|} \int_\grid \pf_h \, dx
\end{equation}
as the phase-field mass.
\end{definition}
\begin{lemma}[Mass conservation]\label{lem:mass}
Suppose that there exists a solution $(\pf_h^{n+1}, \chem_h^{n+1}) \in V_h^\polyord \times
V_h^\polyord$ to the \swipDP scheme in Eqs.~\eqref{eq:ch1ndtDP}--\eqref{eq:ch2ndtDP}.
 Then the phase-field mass is conserved in time, i.e. $m_{\pf_h^{n+1}}
= m_{\pf_h^{n}}$ for all $n \geq 0$.
\end{lemma}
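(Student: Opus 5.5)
The plan is to test the first equation of the \swipDP scheme, Eq.~\eqref{eq:ch1ndtDP}, with the globally constant function $\vp \equiv 1$. This is admissible because $1 \in V_h^0 \subset V_h^\polyord$ for every $\polyord \geq 0$. The time-derivative term then becomes
\begin{equation*}
\frac{1}{\dt}\innerProd{\pf_h^{n+1}-\pf_h^n}{1} = \frac{|\grid|}{\dt}\left(m_{\pf_h^{n+1}} - m_{\pf_h^{n}}\right),
\end{equation*}
by Definition~\ref{def:mass}. It therefore suffices to show that $b(\mob(\Pi_\grid^0 \pf_h^{n+1}), \chem_h^{n+1}, 1) = 0$, whatever the values of $\pf_h^{n+1}$ and $\chem_h^{n+1}$.

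To verify this, I will go through the terms of Eq.~\eqref{eq:diffmob} one at a time with $\vp = 1$. The volume term contains $\nabla \vp = 0$, so it vanishes. In the face sum, the penalty term and the consistency term $\aver{\nabla \chem_h}\vjump{\vp}$ both carry the factor $\vjump{1}$. On interior intersections $\isec \in \Gamma_i$ this jump is $1 - 1 = 0$. The symmetrising term $\aver{\nabla \vp}\vjump{\chem_h}$ vanishes on every intersection because $\nabla 1 = 0$.

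The only point needing care is the boundary intersections $\isec \in \Gamma \setminus \Gamma_i$, since the sum in Eq.~\eqref{eq:diffmob} formally runs over all of $\Gamma$. Here I will use the convention from Section~\ref{sec:notation} that $\elemin_\isec = \elemout_\isec$ on boundary faces. With this convention $\vjump{\vp} = \vp^- - \vp^+ = 0$ for any $\vp$, so all boundary contributions drop out. This is also consistent with the homogeneous Neumann conditions \eqref{eq:ch5a}--\eqref{eq:ch5}.

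I should also check the weight $\haver{\mob(\Pi_\grid^0 \pf_h^{n+1})}$, which may take the form $0/0$ where both neighbouring mobilities vanish. The natural convention is to set it to $0$ there. Because it always multiplies a factor that is already zero, its value does not affect the argument.

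Combining these observations gives $\frac{|\grid|}{\dt}(m_{\pf_h^{n+1}} - m_{\pf_h^n}) = 0$, and since $\dt>0$ and $|\grid|>0$ we conclude $m_{\pf_h^{n+1}} = m_{\pf_h^n}$. Induction on $n$ then yields conservation for all $n\geq 0$. I expect no real obstacle. The only delicate point is the bookkeeping of boundary faces and the possibly degenerate harmonic weight, and both are handled by the jump vanishing identically.
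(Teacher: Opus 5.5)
Your proposal is correct and matches the paper's proof, which simply tests Eq.~\eqref{eq:ch1ndtDP} with $\vp = 1$. Your term-by-term check that $b(\mob(\Pi_\grid^0 \pf_h^{n+1}), \chem_h^{n+1}, 1) = 0$, including the boundary-face convention and the possibly degenerate harmonic weight, fills in details the paper leaves implicit.
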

\begin{proof}
Testing Eq.~\eqref{eq:ch1ndtDP} with $\vp = 1$ yields the desired result.
\end{proof}
\begin{lemma}[Coercivity domain]\label{lem:coercivityDomain}
The coercivity domain $\tilde{\grid}$ defined in Eq.~\eqref{eq:elliptic} is non-empty
for any $\pf_h \in V_h^\polyord$ such that $m_{\pf_h} \in (-1,1)$.
\end{lemma}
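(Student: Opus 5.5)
The plan is to argue by contraposition, using only the definition of $\tilde{\grid}$ in Eq.~\eqref{eq:elliptic} and Definition~\ref{def:mass}. Since $\mob(s)=\max\{1-s^2,0\}$, we have $\mob(\pfmean)=0$ exactly when $|\pfmean|\geq 1$. So $\tilde{\grid}=\emptyset$ means that every cell average satisfies $|\pfmean|\geq 1$. The goal is to show that this forces $m_{\pf_h}\notin(-1,1)$.

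The key identity comes from Definition~\ref{def:cellavgproj}, since $\int_\elem \pf_h\,d\ics = |\elem|\,\pfmean$:
\begin{equation*}
  m_{\pf_h} = \frac{1}{|\grid|}\sum_{\elem\in\grid} |\elem|\,\pfmean .
\end{equation*}
Thus the mass is a convex combination of the cell averages, with weights $|\elem|/|\grid|$. I split into cases according to the signs of the $\pfmean$.
\begin{itemize}
\item If every $\pfmean\geq 1$, then $m_{\pf_h}\geq 1$.
\item If every $\pfmean\leq -1$, then $m_{\pf_h}\leq -1$.
\end{itemize}
In either case $m_{\pf_h}\notin(-1,1)$, which is the contrapositive of the lemma in the same-sign situation.

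The main obstacle is the mixed-sign case, where some cells have $\pfmean\geq 1$ and others have $\pfmean\leq -1$. A convex combination of such values can lie strictly inside $(-1,1)$. A concrete example is two cells of equal area with averages $+1$ and $-1$, which gives $m_{\pf_h}=0$ while $\tilde{\grid}=\emptyset$. Hence the statement, read literally for all $\pf_h\in V_h^\polyord$, cannot be proved without an extra ingredient. The plan is to state explicitly that the proof assumes either of the following:
\begin{itemize}
\item the cell averages share a common sign, which is the regime in which the lemma is later used;
\item the degenerate configuration with mixed $\pm 1$ averages is excluded, for instance because the cell averages are inherited from data with $\mob(\Pi_\grid^0\pf_h)$ not identically zero, as propagated by the scheme.
\end{itemize}
Under such an assumption, the two-case argument above completes the proof. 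I would flag this restriction in a remark rather than hide it.
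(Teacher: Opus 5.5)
Your counterexample is correct, and it shows that the statement itself, not your argument, is at fault. Take two cells of equal area with averages $+1$ and $-1$ (already for $\polyord=0$). Then $m_{\pf_h}=0\in(-1,1)$, but $\mob(\pfmean)=0$ on both cells, so $\tilde{\grid}=\emptyset$. The paper's proof starts from exactly your identity $m_{\pf_h}=\sum_{\elem\in\grid}\frac{|\elem|}{|\grid|}\pfmean$. It then asserts that $m_{\pf_h}\in(-1,1)$ forces some $|\pfmean|<1$. That inference is valid only in your two same-sign cases, and it fails in precisely the mixed-sign configuration you exhibit. Your counterexample also satisfies the bound $\|\Pi_\grid^0\pf_h\|_{L^\infty(\grid)}\le 1$ of Theorem~\ref{thm:boundedVhk}, so the maximum principle does not rescue the lemma. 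Under that bound only the converse holds: a single cell with $|\pfmean|<1$ forces $m_{\pf_h}\in(-1,1)$. You are right to flag this rather than force a proof.

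Where I would push back is on your proposed repairs. The same-sign hypothesis is not the regime in which the lemma is used. The paper combines the lemma with mass conservation to claim $\tilde{\grid}\neq\emptyset$ at every time step. Phase-separated states have cell averages of both signs, and so does the initial data of Example~\ref{ex:merging}; that version of the lemma is therefore of no use to the paper. Your second option is the right one, but as worded (``$\mob(\Pi_\grid^0\pf_h)$ not identically zero'') it simply assumes the conclusion. It can be made rigorous in a few lines using the scheme rather than the mass:
\begin{itemize}
\item Suppose $\tilde{\grid}=\emptyset$ for $\pf_h^{n+1}$. Then $\mob(\Pi_\grid^0\pf_h^{n+1})=0$ in every cell.
\item Since $\haver{\mob}\le 2\min\{\mob^+,\mob^-\}$, also $\haver{\mob(\Pi_\grid^0\pf_h^{n+1})}=0$ on every face.
\item Hence $b(\mob(\Pi_\grid^0\pf_h^{n+1}),\cdot,\cdot)\equiv 0$, and Eq.~\eqref{eq:ch1ndtDP} yields $\pf_h^{n+1}=\pf_h^n$. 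So $\tilde{\grid}=\emptyset$ for $\pf_h^n$ as well.
\end{itemize}
By induction, non-emptiness of $\tilde{\grid}$ at $n=0$ propagates to all time steps. Since the Zhang--Shu limiter leaves cell averages unchanged, the same holds for the limited scheme. The correct hypothesis is therefore $\tilde{\grid}\neq\emptyset$ for $\pf_h^0$, i.e.\ some initial cell average lies in $(-1,1)$, not $m_{\pf_h^0}\in(-1,1)$.
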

\begin{proof}
By Definition \ref{def:mass},
\begin{equation}
  m_{\pf_h} = \frac{1}{|\grid|} \int_\grid \pf_h \, dx = \frac{1}{|\grid|} \sum_{\elem
\in \grid} \int_\elem \pf_h \, dx = \sum_{\elem \in \grid} \frac{|\elem|}{|\grid|}
\pfmean,
\end{equation}
so that $m_{\pf_h} \in (-1,1)$ implies the existence of at least one cell $\elem
\in \grid$ with $|\pfmean| < 1$, and hence $\mob(\pfmean) > 0$, yielding $\tilde{\grid}
\neq \emptyset$.
\end{proof}
As a consequence of Lemma \ref{lem:coercivityDomain}, one can choose initial data
$\pf_h^0$ such that $m_{\pf_h^0} \in (-1,1)$ to ensure that the coercivity domain is
non-empty. Moreover, this property persists at subsequent time steps by the mass
conservation established in Lemma \ref{lem:mass}. For the remainder of this paper,
we assume that $\pf_h^0$ is chosen so that $m_{\pf_h^0} \in (-1,1)$, which is expected with Theorem \ref{thm:boundedVhk}.


\subsection{Energy formulation and a priori bounds}\label{sec:apriori} In this subsection,
we derive a~priori bounds exploiting the energy dissipation property of the \swipDP
scheme. We begin by introducing the following discrete energy functional.
\begin{definition}[Discrete energy]\label{def:energy}
\begin{equation}
  \mathcal{E}[\pf_h] = \frac{1}{\Cahn}\int_\grid W(\pf_h) d\ics + \frac{\Cahn^2}{2}
a (\pf_h,
\pf_h) \,
\end{equation}
as the discrete energy.
\end{definition}
With Definitions~\ref{def:cellavgproj} and \ref{def:energy} in hand, we establish
the following discrete energy dissipation result for the \swipDP scheme.
\begin{theorem}[Discrete energy dissipation for \swipDP]\label{thm:energyDPL}
Suppose that there exists a solution $(\pf_h^1, \chem_h^1) \in V_h^\polyord \times
V_h^\polyord$ to the \swipDP scheme
in Eqs.~\eqref{eq:ch1ndtDP}--\eqref{eq:ch2ndtDP} at time $t = \dt$ and that $a$ is coercive and
$\tilde{b}(\vp, \vp) \geq 0$ for all $\vp \in V_h^\polyord$. Then
\begin{equation}
  \mathcal{E}[\pf_h^1] \leq \mathcal{E}[\pf_h^0].
\end{equation}
\end{theorem}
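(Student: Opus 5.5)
The plan is the standard Eyre convex--concave splitting argument. We test Eq.~\eqref{eq:ch1ndtDP} (with $n=0$) with $\vp = \chem_h^1$ and Eq.~\eqref{eq:ch2ndtDP} with $\xi = \pf_h^1 - \pf_h^0$; both are admissible since $\chem_h^1, \pf_h^1-\pf_h^0 \in V_h^\polyord$. The first test gives
\begin{equation*}
\innerProd{\pf_h^1-\pf_h^0}{\chem_h^1} = -\dt\,\Peclet^{-1}\,\tilde{b}(\chem_h^1,\chem_h^1) \leq 0,
\end{equation*}
by the assumed semi-positivity of $\tilde{b}$. Here $\tilde b$ is evaluated with the mobility $\mob(\Pi_\grid^0\pf_h^1)$, and semi-positivity is assumed for every frozen phase-field, cf.\ the discussion after Theorem~\ref{thm:coercivity}. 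Substituting this into the second test yields
\begin{equation*}
\innerProd{\Phi^+(\pf_h^1)+\Phi^-(\pf_h^0)}{\pf_h^1-\pf_h^0} + \Cahn^2 a(\pf_h^1,\pf_h^1-\pf_h^0) \leq 0 .
\end{equation*}

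Next we bound each term from below by the corresponding energy difference. For the gradient part, symmetry of $a$ gives the polarization identity
\begin{equation*}
a(\pf_h^1,\pf_h^1-\pf_h^0) = \tfrac12 a(\pf_h^1,\pf_h^1) - \tfrac12 a(\pf_h^0,\pf_h^0) + \tfrac12 a(\pf_h^1-\pf_h^0,\pf_h^1-\pf_h^0).
\end{equation*}
Coercivity of $a$ makes the last term nonnegative, so it can be dropped.

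For the potential part, write $W = W_c + W_e$ with $W_c(s)=s^4/4$ (convex, $W_c'=\Phi^+$) and $W_e(s) = -s^2/2 + 1/4$ (concave, $W_e' = \Phi^-$). Pointwise on each element, convexity gives $\Phi^+(\pf_h^1)(\pf_h^1-\pf_h^0) \geq W_c(\pf_h^1)-W_c(\pf_h^0)$, and concavity gives $\Phi^-(\pf_h^0)(\pf_h^1-\pf_h^0) \geq W_e(\pf_h^1)-W_e(\pf_h^0)$. Integrating over $\grid$ yields
\begin{equation*}
\innerProd{\Phi^+(\pf_h^1)+\Phi^-(\pf_h^0)}{\pf_h^1-\pf_h^0} \geq \int_\grid W(\pf_h^1)-W(\pf_h^0)\,d\ics .
\end{equation*}
Combining the two lower bounds with the inequality above gives $\int_\grid (W(\pf_h^1)-W(\pf_h^0))\,d\ics + \frac{\Cahn^2}{2}\bigl(a(\pf_h^1,\pf_h^1)-a(\pf_h^0,\pf_h^0)\bigr)\leq 0$.

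The main obstacle is a scaling mismatch between the scheme and Definition~\ref{def:energy}, which carries a factor $\Cahn^{-1}$ on the potential term while Eq.~\eqref{eq:ch2ndtDP} has unit weight on $W'$. The argument above proves dissipation of $\int_\grid W\,d\ics + \frac{\Cahn^2}{2}a(\cdot,\cdot)$. To handle this, I would first check whether the factor is a notational convention, for instance an energy rescaled consistently with the scheme, in which case the argument applies verbatim with the matching weight. If not, I would state the result for the energy that the scheme actually dissipates. The remaining steps are routine, because the convex--concave inequalities hold pointwise and need no bounds on $\pf_h$.
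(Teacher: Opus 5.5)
Your argument is essentially the paper's proof: the same test functions $\vp=\chem_h^1$ and $\xi \propto \pf_h^1-\pf_h^0$, semi-positivity of $\tilde b$, the polarization identity with coercivity of $a$, and the Eyre splitting. Your version is in fact cleaner on two points, where the paper slips:
\begin{itemize}
\item You eliminate the cross term correctly, keeping the factor $\dt$, whereas the paper simply adds the two tested equations even though their coefficients ($1/\dt$ and $1/\Peclet$) do not match.
\item You use the direct pointwise bound $\innerProd{\Phi^+(\pf_h^1)+\Phi^-(\pf_h^0)}{\pf_h^1-\pf_h^0}\geq\int_\grid W(\pf_h^1)-W(\pf_h^0)\,d\ics$. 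The paper instead passes through $W'(\pf_h^1)$ and invokes convexity of the non-convex double well, which is invalid.
\end{itemize}
Your concern about the weights is justified. The paper's closing step of multiplying by $\Peclet/\Cahn$ actually yields dissipation of $\frac{1}{\Cahn}\int_\grid W\,d\ics+\frac{\Cahn}{2}a(\cdot,\cdot)$. That is a positive multiple of the functional you obtain, not the $\frac{\Cahn^2}{2}$ weighting of Definition~\ref{def:energy}, so stating the result for the energy the scheme actually dissipates is the right resolution.
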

\begin{proof}
Testing Eq.~\eqref{eq:ch1ndtDP} with $\vp = \chem_h^1$ and
Eq.~\eqref{eq:ch2ndtDP} with $\xi = \frac{1}{\Peclet}(\pf_h^1 - \pf_h^{0})$, we obtain
\begin{eqnarray}
  \frac{1}{\dt}\innerProd{\pf_h^1 - \pf_h^{0}}{\chem_h^1} + \Peclet^{-1}\,
b(\mob(\proj_\grid^0\pf_h^1), \chem_h^1, \chem_h^1) &=& 0, \notag \\
  \frac{1}{\Peclet}\innerProd{\chem_h^1}{\pf_h^1 - \pf_h^{0}} - \frac{1}{\Peclet}\innerProd{\Phi^+(\pf_h^1)
 + \Phi^-(\pf_h^{0})}{\pf_h^1 - \pf_h^{0}}&&  \\
   - \frac{\Cahn^2}{\Peclet}\, a(\pf_h^1, \pf_h^1 - \pf_h^{0}) &=& 0. \notag
\end{eqnarray}
Adding the two equations yields:
\begin{eqnarray}\label{eq:energyDPL}
  -\frac{1}{\Peclet}\innerProd{\Phi^+(\pf_h^1) + \Phi^-(\pf_h^{0})}{\pf_h^1
- \pf_h^{0}} &-& \frac{\Cahn^2}{\Peclet}\, a(\pf_h^1, \pf_h^1 - \pf_h^{0})
\notag \\
  &=& -\Peclet^{-1}\, b(\mob(\proj_\grid^0\pf_h^1), \chem_h^1, \chem_h^1) \leq 0,
\end{eqnarray}
where the inequality follows from $\tilde{b}(\upsilon_h^1, \upsilon_h^1) \geq 0$.
Since $\Phi^+(\pf) = \pf^3$ is convex and $\Phi^-(\pf) = -\pf$ is concave,
we have
\begin{equation}
  \innerProd{\Phi^+(\pf_h^1) + \Phi^-(\pf_h^{0})}{\pf_h^1 - \pf_h^{0}}
\geq \innerProd{W'(\pf_h^1)}{\pf_h^1 - \pf_h^{0}}.
\end{equation}
Substituting into Eq.~\eqref{eq:energyDPL},
\begin{equation}\label{eq:final}
  \frac{1}{\Peclet}\innerProd{W'(\pf_h^1)}{\pf_h^1 - \pf_h^{0}} + \frac{\Cahn^2}{\Peclet}\,
a(\pf_h^1, \pf_h^1 - \pf_h^{0}) \leq 0.
\end{equation}
Next, we note that
\begin{equation}
\innerProd{W(\pf_h^1) - W(\pf_h^{0})}{1} \leq \innerProd{W'(\pf_h^1)}{\pf_h^1
- \pf_h^{0}},
\end{equation}
by convexity. Using
the polarization identity,
\begin{eqnarray}
  a(\pf_h^1, \pf_h^1 &-& \pf_h^{0}) = \frac{1}{2}\bigl(a(\pf_h^1, \pf_h^1)
- a(\pf_h^{0}, \pf_h^{0}) \\  &+& a(\pf_h^1 - \pf_h^{0},
\pf_h^1 - \pf_h^{0})\bigr)
  \geq\frac{1}{2}\bigl(a(\pf_h^1, \pf_h^1) - a(\pf_h^{0}, \pf_h^{0})\bigr),
\end{eqnarray}
and multiplying Eq.~\eqref{eq:final} with $\frac{\Peclet}{\Cahn}$ we arrive at the desired results following Definition~\ref{def:energy} to complete
the proof.
\end{proof}
\begin{corollary}[Energy estimate]\label{cor:energyEstimate}
Under the assumptions of Theorem~\ref{thm:energyDPL}, a solution $(\pf_h^1, \chem_h^1)$ to Eqs.~\eqref{eq:ch1ndtDP}--\eqref{eq:ch2ndtDP}
satisfies
\begin{equation}\label{eq:energyEstimate}
  \mathcal{E}[\pf_h^1] + \Peclet^{-1} b(M(\proj_\grid^0 \pf_h^1), \chem_h^1, \chem_h^1)\leq
\mathcal{E}[\pf_h^0],
\end{equation}
and, moreover $ \tilde{C}\norm{\chem_h^1}^2_{DG} \leq \Peclet\, \mathcal{E}[\pf_h^0]$.
\end{corollary}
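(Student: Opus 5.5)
The plan is to retrace the proof of Theorem~\ref{thm:energyDPL} without discarding the dissipative terms. This gives the sharper inequality \eqref{eq:energyEstimate}; the bound on $\norm{\chem_h^1}_{DG}$ then follows from the coercivity in Theorem~\ref{thm:coercivity} together with nonnegativity of the energy.

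\textbf{Step 1: the energy identity.} As before, I test Eq.~\eqref{eq:ch1ndtDP} with $\vp=\chem_h^1$ and Eq.~\eqref{eq:ch2ndtDP} with $\xi=\pf_h^1-\pf_h^0$. Eliminating the common term $\innerProd{\chem_h^1}{\pf_h^1-\pf_h^0}$ gives the exact identity
\begin{equation*}
\innerProd{\Phi^+(\pf_h^1)+\Phi^-(\pf_h^0)}{\pf_h^1-\pf_h^0} + \Cahn^2 a(\pf_h^1,\pf_h^1-\pf_h^0) = -\frac{\dt}{\Peclet}\, b(\mob(\proj_\grid^0\pf_h^1),\chem_h^1,\chem_h^1).
\end{equation*}

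\textbf{Step 2: lower bounds on the left-hand side.} For the nonlinear term I use the convex--concave splitting inequalities, i.e.\ convexity of $\Phi^+$ and concavity of $\Phi^-$. These bound the first term from below by $\innerProd{W(\pf_h^1)-W(\pf_h^0)}{1}$. For the second term I use the polarization identity and drop the nonnegative term $a(\pf_h^1-\pf_h^0,\pf_h^1-\pf_h^0)$, which is nonnegative because $a$ is coercive. Together these bound the left-hand side from below by the energy difference, up to the constant normalisations of Definition~\ref{def:energy}. Rearranging then yields $\mathcal{E}[\pf_h^1]+c\,b(\cdot,\chem_h^1,\chem_h^1)\le\mathcal{E}[\pf_h^0]$ for a positive constant $c$.

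\textbf{Step 3: matching the constants.} The main bookkeeping obstacle is that Definition~\ref{def:energy} weights $W$ by $\Cahn^{-1}$ and the gradient term by $\Cahn^2/2$, while the identity weights them by $1$ and $\Cahn^2$. The constant $c$ therefore contains $\dt$ and powers of $\Cahn$. I would rescale, multiplying by $\Peclet/\Cahn$ as in the proof of Theorem~\ref{thm:energyDPL}, and absorb the resulting factor into the scaling of time (or use $\dt\le1$). The aim is to reach the stated $\Peclet^{-1}$ weighting. Where an exact match is impossible, I would state the inequality with the correct constant.

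\textbf{Step 4: the bound on $\norm{\chem_h^1}_{DG}$.} First, $\mathcal{E}[\pf_h^1]\ge0$, since $W\ge0$ for the quartic potential and $a(\pf_h^1,\pf_h^1)\ge0$ by coercivity. Dropping it gives $\Peclet^{-1}b(\mob(\proj_\grid^0\pf_h^1),\chem_h^1,\chem_h^1)\le\mathcal{E}[\pf_h^0]$. Next I apply Eq.~\eqref{eq:finalDG} from Theorem~\ref{thm:coercivity}, which yields $\tilde C\norm{\chem_h^1}_{DG}^2\le\Peclet\,\mathcal{E}[\pf_h^0]$.

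\textbf{Step 5: when Eq.~\eqref{eq:finalDG} applies.} Eq.~\eqref{eq:finalDG} needs $\tilde{\grid}\neq\emptyset$. This holds by Lemma~\ref{lem:coercivityDomain} and mass conservation (Lemma~\ref{lem:mass}), given the standing assumption $m_{\pf_h^0}\in(-1,1)$. On cells where $\mob(\pfmean)=0$, the DG seminorm and $b$ vanish simultaneously, so the inequality holds trivially there. The remaining caveat is constants: $\chem_h^1$ must not lie in the kernel excluded from $\tilde V_h^\polyord$. Both sides vanish on that kernel, so this causes no harm.
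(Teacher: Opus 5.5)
Your overall route is the paper's. You test with $\chem_h^1$ and $\pf_h^1-\pf_h^0$, use the convex--concave splitting and the polarization identity, then drop $\mathcal{E}[\pf_h^1]\ge 0$ and invoke Eq.~\eqref{eq:finalDG}. Steps 1, 2, 4 and 5 are sound, and Steps 1--2 are more careful than the paper's write-up. To cancel $\innerProd{\chem_h^1}{\pf_h^1-\pf_h^0}$ one must multiply Eq.~\eqref{eq:ch1ndtDP} by $\dt$, so the dissipation carries the weight $\dt/\Peclet$. Your pointwise inequality $W(\pf_h^1)-W(\pf_h^0)\le(\Phi^+(\pf_h^1)+\Phi^-(\pf_h^0))(\pf_h^1-\pf_h^0)$ is the correct one. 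It follows from convexity of the primitives $\frac{1}{4}s^4$ and $\frac{1}{2}s^2$, not of $s^3$. The paper instead detours through convexity of $W$, which fails for the double well.

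The genuine gap is Step 3, and it cannot be closed. Steps 1--2 give exactly
\[
\int_\grid W(\pf_h^1)\,d\ics+\frac{\Cahn^2}{2}a(\pf_h^1,\pf_h^1)+\frac{\dt}{\Peclet}\,b(\mob(\proj_\grid^0\pf_h^1),\chem_h^1,\chem_h^1)\le\int_\grid W(\pf_h^0)\,d\ics+\frac{\Cahn^2}{2}a(\pf_h^0,\pf_h^0).
\]
Multiplying by any $\kappa>0$ keeps the ratio $1:\Cahn^2/2$ between the potential and gradient parts. Definition~\ref{def:energy} has ratio $\Cahn^{-1}:\Cahn^2/2$, so no rescaling turns this into $\mathcal{E}$ unless $\Cahn=1$; in particular, rescaling by $\Peclet/\Cahn$ does not.

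For the coefficient of $b$, since $b\ge 0$ you can only lower it. From $\kappa\dt/\Peclet$ you reach $\Peclet^{-1}$ only if $\kappa\dt\ge 1$, so assuming $\dt\le 1$ points in the wrong direction. Also, $\dt$ is a fixed parameter of the scheme, not something you can absorb into a time rescaling.

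In fact the stated inequality cannot hold in general. For a fixed mesh, $\mathcal{E}[\pf_h^0]-\mathcal{E}[\pf_h^1]=O(\dt)$ as $\dt\to 0$. Meanwhile $b(\mob(\proj_\grid^0\pf_h^1),\chem_h^1,\chem_h^1)$ does not tend to zero unless $\pf_h^0$ is a discrete equilibrium. The paper obtains $\Peclet^{-1}$ only because the proof of Theorem~\ref{thm:energyDPL} treats $\frac{1}{\dt}\innerProd{\pf_h^1-\pf_h^0}{\chem_h^1}$ and $\frac{1}{\Peclet}\innerProd{\chem_h^1}{\pf_h^1-\pf_h^0}$ as cancelling when the tested equations are added.

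Your fallback clause is therefore the right instinct, but it has to replace Step 3 rather than supplement it. What your argument actually proves is the display above: dissipation of $\int_\grid W(\pf)\,d\ics+\frac{\Cahn^2}{2}a(\pf,\pf)$ with weight $\dt/\Peclet$. Step 4 then gives $\tilde C\norm{\chem_h^1}_{DG}^2\le\frac{\Peclet}{\dt}\bigl(\int_\grid W(\pf_h^0)\,d\ics+\frac{\Cahn^2}{2}a(\pf_h^0,\pf_h^0)\bigr)$ in place of $\Peclet\,\mathcal{E}[\pf_h^0]$.
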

\begin{proof}
From Eq.~\eqref{eq:energyDPL},
\begin{equation}
  \frac{1}{\Peclet}\innerProd{\Phi^+(\pf_h^1) - \Phi^-(\pf_h^{0})}{\pf_h^1 - \pf_h^{0}}
+ \frac{\Cahn^2}{\Peclet}\, a(\pf_h^1, \pf_h^1 - \pf_h^{0}) + \Peclet^{-1}\, b(\mob(\proj_\grid^0
\pf_h^1), \chem_h^1, \chem_h^1) = 0.
\end{equation}
Applying the coercivity estimate $b(\mob(\proj_\grid^0 \pf_h^1), \chem_h^1, \chem_h^1)
\geq \tilde{C}\, \norm{\chem_h^1}^2_{DG}$ from Theorem~\ref{thm:coercivity}, the Eyre
splitting inequality, and the polarization identity as in the proof of Theorem~\ref{thm:energyDPL},
we obtain Eq.~\eqref{eq:energyEstimate}. The bound on $\norm{\chem_h^1}^2_{DG}$ then
follows from $b(\mob(\proj_\grid^0 \pf_h^1), \chem_h^1, \chem_h^1) \geq \tilde{C}\,
\norm{\chem_h^1}^2_{DG}$ together with $\mathcal{E}[\pf_h^1] \geq 0$.
\end{proof}
\begin{lemma}[Uniform $L^2$-bound for $\pf_h^1$]\label{lem:uniformBound}
Under the assumptions of Theorem~\ref{thm:energyDPL}, if $\pf_h^1$ is a solution
to Eq.~\eqref{eq:ch1ndtDP}, then
\begin{equation}\label{eq:uniformBoundCorollary}
  \norm{\pf_h^1}_{L^2(\grid)}^2 \leq 2\Cahn\, \mathcal{E}[\pf_h^0] + \frac{|\grid|}{2}
=: C_\pf(\pf_h^0, \Cahn, |\grid|),
\end{equation}
where $C_\pf(\pf_h^0, \Cahn, |\grid|)$ is a constant depending only on the initial
phase-field $\pf_h^0$, the Cahn number $\Cahn$, and the measure of the domain $|\grid|$.
\end{lemma}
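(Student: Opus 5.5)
The plan is to combine the energy estimate with a pointwise inequality relating $\pf^2$ to the quartic potential $W(\pf)=\frac14(\pf^2-1)^2$. First, I use Theorem~\ref{thm:energyDPL}, which gives $\mathcal{E}[\pf_h^1]\le\mathcal{E}[\pf_h^0]$. Coercivity of $a$ gives $a(\pf_h^1,\pf_h^1)\ge 0$. Dropping the gradient part of Definition~\ref{def:energy} then yields
\begin{equation*}
  \int_\grid W(\pf_h^1)\,d\ics \;\le\; \Cahn\,\mathcal{E}[\pf_h^1]\;\le\;\Cahn\,\mathcal{E}[\pf_h^0].
\end{equation*}
This reduces the lemma to showing
\begin{equation*}
  \norm{\pf_h^1}_{L^2(\grid)}^2\le 2\int_\grid W(\pf_h^1)\,d\ics+\tfrac{|\grid|}{2}.
\end{equation*}

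Second, I compare integrands. Expanding,
\begin{equation*}
  2W(\pf)+\tfrac12-\pf^2=\tfrac12\bigl(\pf^4-4\pf^2+2\bigr).
\end{equation*}
This is nonnegative exactly when $\pf^2\le 2-\sqrt2$ or $\pf^2\ge 2+\sqrt2$. So the pointwise route alone does not close the estimate. Its deficit is $\tfrac12(\pf^4-4\pf^2+2)\ge -1$, attained at $\pf^2=2$. Pointwise one only obtains $\pf^2\le 2W(\pf)+\tfrac32$.

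Third, I must recover the missing $|\grid|$ from information the pointwise inequality ignores. I would try two sources, in this order.
\begin{itemize}
\item \emph{Mass constraint.} The standing assumption $m_{\pf_h^0}\in(-1,1)$ is preserved by Lemma~\ref{lem:mass}, so $\int_\grid\pf_h^1\,d\ics$ is controlled. I would write
\begin{equation*}
  \pf^2=(\pf-m)^2+2m\pf-m^2,
\end{equation*}
integrate so the linear term contributes only $m^2|\grid|\le|\grid|$, and apply the quartic bound to the fluctuation $\pf-m$.
\item \emph{Discarded penalty energy.} Failing that, I would try to use the previously dropped term $\frac{\Cahn^2}{2}a(\pf_h^1,\pf_h^1)$ via a discrete Poincar\'e inequality for the mean-free part.
\end{itemize}

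The main obstacle is precisely this third step. The pointwise constant $\tfrac12$ is too small, and neither device above produces an $\Cahn$-independent correction in an obvious way. Note also that the spatially constant state $\pf\equiv\sqrt2$ satisfies $\pf^2=2W(\pf)+\tfrac32$, so the constant $\tfrac32$ is sharp without extra structure, and it is excluded only by the mass assumption. If these attempts fail, the honest conclusion is that the argument proves the bound with $\tfrac32|\grid|$. The stated constant $\tfrac12|\grid|$ would then need an additional hypothesis, for instance $\norm{\pf_h^1}_{L^\infty}^2\le 2-\sqrt2$ or $\ge 2+\sqrt2$ pointwise, which I would flag explicitly.
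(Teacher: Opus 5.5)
Your first two steps are the paper's proof. The paper also discards $a(\pf_h^1,\pf_h^1)\ge 0$ and uses $\mathcal{E}[\pf_h^1]\le\mathcal{E}[\pf_h^0]$. It then applies $\frac14\pf^4\ge\pf^2-1$ (that is, $(\pf^2-2)^2\ge 0$) to get $W(\pf)\ge\frac12\pf^2-\frac34$, which is exactly your sharp pointwise bound $\pf^2\le 2W(\pf)+\frac32$. Integrating gives $\norm{\pf_h^1}_{L^2(\grid)}^2\le 2\Cahn\,\mathcal{E}[\pf_h^0]+\frac32|\grid|$. The paper's claim that this ``rearranges to'' the constant $\frac{|\grid|}{2}$ is an arithmetic slip. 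So the obstacle in your third step is not a missing idea, and your fallback conclusion (constant $\frac32|\grid|$) is the correct form of the lemma. It is also all that is used later: the proof of Theorem~\ref{thm:existence} needs $C_\pf$ only through \eqref{eq:banach}, where what matters is uniformity in $\lambda$ and $\delta$, not the particular constant.

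Do not pursue the two rescue routes. Your remark that the extremal configuration is ``excluded only by the mass assumption'' is too optimistic: with $\frac{|\grid|}{2}$ the inequality is false under every hypothesis the paper imposes. Take $\pf_h^0\equiv c$ with $2-\sqrt2<c^2<1$, e.g.\ $c=0.9$. Since $a(c,\xi)=0$ and $b(\cdot,c',\vp)=0$ whenever $c,c'$ are global constants (no gradients, no jumps), the pair $(\pf_h^1,\chem_h^1)=(c,\,c^3-c)$ solves Eqs.~\eqref{eq:ch1ndtDP}--\eqref{eq:ch2ndtDP}. Its mass lies in $(-1,1)$ and its cell averages lie in $[-1,1]$. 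Moreover $\mob(c)>0$, so $\tilde{\grid}=\grid$. Nevertheless $\norm{\pf_h^1}_{L^2(\grid)}^2=0.81\,|\grid|$, whereas $2\Cahn\,\mathcal{E}[\pf_h^0]+\frac{|\grid|}{2}=(2W(0.9)+\frac12)|\grid|\approx 0.518\,|\grid|$. For this state $\pf_h^1-m_{\pf_h^1}\equiv 0$ and $a(\pf_h^1,\pf_h^1)=0$, so neither your mean--fluctuation splitting nor a discrete Poincar\'e inequality can supply anything. Your own sign analysis of $\pf^4-4\pf^2+2$ already shows this: every constant state in the window $2-\sqrt2<c^2<1$ is admissible and violates the claim. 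The right repair is to state the lemma with $C_\pf=2\Cahn\,\mathcal{E}[\pf_h^0]+\frac32|\grid|$, not to add a pointwise hypothesis excluding $\pf^2\in(2-\sqrt2,2+\sqrt2)$.
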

\begin{proof}
By Corollary~\ref{cor:energyEstimate}, $\mathcal{E}[\pf_h^1] \leq \mathcal{E}[\pf_h^0]$.
Since $a(\pf_h, \pf_h) \geq 0$:
\begin{equation}
  \frac{1}{\Cahn}\int_\grid W(\pf_h^1)\, dx \leq \mathcal{E}[\pf_h^1] \leq \mathcal{E}[\pf_h^0].
\end{equation}
For the quartic double-well potential $W(\pf) = \frac{1}{4}(\pf^2 - 1)^2 = \frac{1}{4}\pf^4
- \frac{1}{2}\pf^2 + \frac{1}{4}$, Young's inequality gives $\frac{1}{4}\pf^4 \geq
\pf^2 - 1$, so that $W(\pf) \geq \frac{1}{2}\pf^2 - \frac{3}{4}$. Therefore,
\begin{equation}
  \frac{1}{\Cahn}\left(\frac{1}{2}\norm{\pf_h^1}_{L^2(\grid)}^2 - \frac{3|\grid|}{4}\right)
\leq \frac{1}{\Cahn}\int_\grid W(\pf_h^1)\, dx \leq \mathcal{E}[\pf_h^0],
\end{equation}
which rearranges to Eq.~\eqref{eq:uniformBoundCorollary}.
\end{proof}


\subsection{Existence}\label{sec:Existence} In this section we use the results from
Section~\ref{sec:apriori} to establish the existence of solutions to the \swipDP
scheme in Eqs.~\eqref{eq:ch1ndtDP}--\eqref{eq:ch2ndtDP}.
\begin{theorem}[Existence of solutions to \swipDP]\label{thm:existence}
Let $\pf_h^{0} \in V_h^\polyord$ be a given initial phase-field. Then there exists
a solution $(\pf_h^1, \chem_h^1) \in V_h^\polyord \times V_h^\polyord$ to the \swipDP
scheme in Eqs.~\eqref{eq:ch1ndtDP}--\eqref{eq:ch2ndtDP}.
\end{theorem}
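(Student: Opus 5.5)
We plan to use the Leray--Schauder fixed-point theorem (Schaefer's form) on the finite-dimensional space $V_h^\polyord$. The a~priori bounds of Section~\ref{sec:apriori} supply the required uniform estimate. We define a map $\mathcal{T}: V_h^\polyord \to V_h^\polyord$ by freezing the nonlinearities. Given $\hat\pf \in V_h^\polyord$, we let $\mathcal{T}(\hat\pf) = \pf$, where $(\pf,\chem)\in V_h^\polyord\times V_h^\polyord$ solves the linear system
\begin{align*}
\frac{1}{\dt}\innerProd{\pf - \pf_h^0}{\vp} + \Peclet^{-1} b(\mob(\Pi_\grid^0 \hat\pf), \chem, \vp) &= 0,\\
\innerProd{\chem}{\xi} - \innerProd{\hat\pf^2 \pf - \pf_h^0}{\xi} - \Cahn^2 a(\pf,\xi) &= 0,
\end{align*}
for all $\vp,\xi \in V_h^\polyord$. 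Here $\Phi^+(\pf)=\pf^3$ is linearised as $\hat\pf^2\pf$, so a fixed point of $\mathcal{T}$ solves Eqs.~\eqref{eq:ch1ndtDP}--\eqref{eq:ch2ndtDP}.

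\textbf{Step 1: $\mathcal{T}$ is well defined.} Since the system is square and finite-dimensional, it suffices to show that the homogeneous problem, with $\pf_h^0$ replaced by zero, has only the trivial solution. We test with $\vp=\chem$ and $\xi=\Peclet^{-1}\dt^{-1}\pf$, then subtract. This gives
\[
\Peclet^{-1} b(\mob(\Pi^0_\grid\hat\pf),\chem,\chem) + \Peclet^{-1}\dt^{-1}\bigl(\innerProd{\hat\pf^2\pf}{\pf} + \Cahn^2 a(\pf,\pf)\bigr) = 0.
\]
All three terms are nonnegative: the first by the semi-positivity of $\tilde b$ from Theorem~\ref{thm:coercivity} (with $\eta$ large, as discussed after it), and the last by the coercivity of $a$. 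Hence $a(\pf,\pf)=0$, so $\pf$ is a global constant $c$. Testing the first equation with $\vp=1$ gives $\int\pf=0$ (since $b(\cdot,\chem,1)=0$), so $c=0$. The second equation then yields $\chem=0$. This is the delicate point where degeneracy might have caused trouble. It is avoided because the phase-field unknown carries the mass term and $a$ is independent of the mobility.

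\textbf{Step 2: continuity.} The map $\hat\pf\mapsto(\mob(\Pi^0_\grid\hat\pf),\hat\pf^2)$ is continuous, so the coefficients of the system depend continuously on $\hat\pf$. The harmonic average $\haver{\cdot}$ should be interpreted as $0$ when both neighbouring mobilities vanish; it is continuous there because $\haver{x,y}\le 2\min\{x,y\}$. By Step 1 the system matrix is invertible for every $\hat\pf$, and matrix inversion is continuous, so $\mathcal{T}$ is continuous. Compactness is automatic in finite dimensions.

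\textbf{Step 3: a~priori bound.} This is the main obstacle. We must bound every $\pf$ satisfying $\pf=\lambda\mathcal{T}(\pf)$ with $\lambda\in[0,1]$, uniformly in $\lambda$. For $\lambda\in(0,1]$ the relation means that $\pf/\lambda$ solves the frozen system with $\hat\pf=\pf$. Following \cite{Gunnarsson:2026}, we would instead embed $\lambda$ into the data, multiplying the terms involving $\pf_h^0$ by $\lambda$. Then a fixed point is a solution of the \swipDP scheme with initial datum $\lambda\pf_h^0$ and explicit term $\lambda\Phi^-(\pf_h^0)$. We repeat the argument of Theorem~\ref{thm:energyDPL} and Corollary~\ref{cor:energyEstimate}, which uses only convexity, the polarization identity and $\tilde b\ge 0$. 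This gives $\mathcal{E}[\pf]\le \mathcal{E}[\lambda\pf_h^0] + C\norm{\pf_h^0}^2_{L^2}$. The right-hand side is bounded uniformly in $\lambda\in[0,1]$ because $\mathcal{E}$ is continuous on the finite-dimensional space. Lemma~\ref{lem:uniformBound} then bounds $\norm{\pf}_{L^2(\grid)}$ independently of $\lambda$. Leray--Schauder therefore gives a fixed point at $\lambda=1$, which is the desired solution $(\pf_h^1,\chem_h^1)$, with $\chem_h^1$ recovered from the second equation. The care needed is in arranging the $\lambda$-homotopy so that the energy argument still closes, especially the $\Phi^-$ cross term. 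This term we would handle by Young's inequality, absorbing it into the quartic part of $W$.
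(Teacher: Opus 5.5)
Your proof is correct, but it takes a genuinely different route from the paper.

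\textbf{The paper's route.} The paper regularises the mobility to $\mob_\delta=\max\{\mob,\delta\}$. It eliminates $\chem$ explicitly through the second equation (the map $\mathcal{S}$, which is just an $L^2$-projection and needs no inversion). It then applies Leray--Schauder to the fully nonlinear map, with $\lambda$ multiplying the $b$-term, and takes the uniform bound from the energy estimate. Finally it passes to the limit $\delta\to0^+$ by compactness, which requires checking that the harmonic averages converge on faces where a neighbouring mobility vanishes.

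\textbf{Your route.} You freeze $\mob(\Pi_\grid^0\hat\pf)$ and linearise $\Phi^+$ as $\hat\pf^2\pf$. Step~1 then shows that the resulting linear system is uniquely solvable \emph{even with degenerate mobility}. The kernel is removed by three ingredients: the mass term, the mobility-independent form $a$, and the zero-mean condition obtained from testing with $\vp=1$. Combined with the continuity of $\haver{\cdot}$ on $[0,\infty)^2$ (via $\haver{x,y}\le 2\min\{x,y\}$), this makes the regularisation and the $\delta$-limit unnecessary. The same observation would in fact let the paper use its explicit map directly with $\mob$.

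\textbf{Trade-offs.} Your fixed-point map needs a linear solve and an invertibility argument, whereas the paper's map is explicit. The paper's route mirrors the continuous-level existence proofs and produces $\pf_h^1$ as a limit of regularised solutions.

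\textbf{Two minor points.}
In Step~1 the test function should be $\xi=\dt^{-1}\pf$, not $\Peclet^{-1}\dt^{-1}\pf$; otherwise the $\innerProd{\pf}{\chem}$ terms do not cancel unless $\Peclet=1$.

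In Step~3 your concern about a $\Phi^-$ cross term does not arise. The frozen system is linear in the data, so $\lambda\mathcal{T}$ coincides with the frozen map built from the datum $\lambda\pf_h^0$. Hence the Schaefer form and your data-embedding are the same homotopy. Since $\Phi^-$ is also linear, $\lambda\Phi^-(\pf_h^0)=\Phi^-(\lambda\pf_h^0)$. Every fixed point is therefore exactly a \swipDP step from $\lambda\pf_h^0$. Theorem~\ref{thm:energyDPL} then gives $\mathcal{E}[\pf]\le\mathcal{E}[\lambda\pf_h^0]$ directly, with no Young's inequality needed.
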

\begin{proof}
It is sufficient to establish existence for $n = 0$.
Following an approach analogous to \cite[Theorem 4.1]{Barett:1999}, \cite[Theorem
2, Lemma 3]{Elliott:2000}, and \cite[Theorem 1]{dai:2016}, we first solve a regularized
problem with strictly positive mobility and subsequently pass to the degenerate limit.
For $\delta \in (0,1)$, define $\mob_\delta(\pf) = \max\{\mob(\pf), \delta\}$ and
consider the regularized scheme: find $(\pf_h^{1, \delta}, \chem_h^{1, \delta}) \in
V_h^\polyord \times V_h^\polyord$ such that
\begin{eqnarray}
  \frac{1}{\dt}\innerProd{\pf_h^{1, \delta} - \pf_h^{0}}{\vp} + \Peclet^{-1} b(\mob_\delta(\Pi_\grid^0
\pf_h^{1, \delta}), \chem_h^{1, \delta}, \vp) &=& 0, \label{eq:regch1} \\
  \innerProd{\chem_h^{1, \delta}}{\xi} - \innerProd{\Phi^+(\pf_h^{1, \delta}) + \Phi^-(\pf_h^{0})}{\xi}
- \Cahn^2 a(\pf_h^{1, \delta}, \xi) &=& 0, \label{eq:regch2}
\end{eqnarray}
for all $\vp, \xi \in V_h^\polyord$. Since $\mob_\delta(\proj_\grid^0 \pf_h^{1, \delta})
\geq \delta > 0$, Theorem~\ref{thm:coercivity} applies with $\tilde{\grid} = \grid$.
Let $N_h = \dim(V_h^\polyord)$ and $\{\phi_j\}_{j=1}^{N_h}$ be the piecewise orthogonal
basis of the broken polynomial space of $V_h^\polyord$ from Lemma~\ref{lem:orthogonality}.
Testing Eq.~\eqref{eq:regch2} with $\xi = \phi_j$ and using orthogonality gives
\begin{equation}\label{eq:mumap}
  c_j = |\elem_j|^{-1}\bigl(\innerProd{\Phi^+(\pf_h^{1,\delta}) + \Phi^-(\pf_h^{0})}{\phi_j}
+ \Cahn^2 a(\pf_h^{1,\delta}, \phi_j)\bigr),
\end{equation}
where $\elem_j$ is the element supporting $\phi_j$. We note that, by construction,
$\chem_h^{1,\delta} = \sum_{j = 1}^{N_h} c_j \phi_j =: \mathcal{S}[\pf_h^{1,\delta}:
\pf_h^0]$ is determined by $\pf_h^{1,\delta}$ given $\pf_h^0$, and $\mathcal{S}:
V_h^\polyord \to V_h^\polyord$ is continuous. Substituting into Eq.~\eqref{eq:regch1}
reduces the system to a single equation in $\pf_h^{1,\delta}$. Moreover, identifying
$V_h^\polyord \cong \R^{N_h}$ via the basis $\{\phi_j\}_{j = 1}^{N_h}$, we write
$\pf_h^{1,\delta} = \sum_{j=1}^{N_h} d_j \phi_j$ for a coefficient vector $\mathbf{d}
= (d_1, \ldots, d_{N_h})^T \in \R^{N_h}$. To prove existence, we apply the Leray--Schauder
fixed-point theorem (see \cite[Theorem 11.3]{Gilbarg:2001} and \cite[Theorem 2.6]{Acosta:2021})
following similarly to the proof in \cite[Theorem 2.5]{Acosta:2021}. For $\lambda
\in [0,1]$, we reformulate Eqs.\eqref{eq:regch1}--\eqref{eq:mumap} to finding $\mathbf{d}^{(\lambda)}
\in \R^{N_h}$ such that
\begin{equation}\label{eq:parameterized}
  d_j^{(\lambda)} = d_j^{0} - \frac{\lambda \dt \Peclet^{-1} b\bigl(\mob_\delta(\Pi_\grid^0
\pf_h^{(\lambda)}), \mathcal{S}[\pf_h^{(\lambda)}: \pf_h^0], \phi_j\bigr)}{|\elem_j|},
\quad j = 1, \ldots, N_h,
\end{equation}
where, implicitly, $\pf_h^{(\lambda)} = \sum_i d_i^{(\lambda)} \phi_i$ defines a
fixed-point formulation for the map $T_\lambda: \R^{N_h} \to \R^{N_h}$ satisfying
$T_\lambda(\mb{d}^{(\lambda)}) = \mb{d}^{(\lambda)}$. We now verify the hypotheses
of the Leray--Schauder fixed-point theorem. First, $T_\lambda$ is continuous for
each $\lambda \in [0,1]$, since it is a composition of continuous functions of $d^{(\lambda)}_j$.
Moreover, the map $(\lambda, \mb{d}) \mapsto T_\lambda(\mb{d})$ is jointly continuous,
as the right-hand side of Eq.~\eqref{eq:parameterized} depends continuously on both
$\lambda$ and $\mb{d}^{(\lambda)}$. Since $V_h^\polyord$ is finite-dimensional, every continuous map
on a bounded subset of $\R^{N_h}$ has a compact image; hence $T_\lambda$ is compact
for $\lambda \in [0,1]$. Second, at $\lambda = 0$ the map $T_\lambda$ simplifies
to $T_0(\mb{d}) = \mb{d}^0$ for all $\mb{d} \in \R^{N_h}$, so that $\mb{d}^0$ is
trivially a fixed point of $T_0$. Third, we show that the set of fixed points $\{\mb{d}^{(\lambda)}:
\lambda \in [0,1]\}$ is uniformly bounded in $\R^{N_h}$. To establish this,
we note that the energy identity underlying Corollary~\ref{cor:energyEstimate} remains
valid for the regularized mobility $\mob_\delta$ with the rescaling $\Peclet^{-1}
\to \lambda \Peclet^{-1}$, since $0 < \delta \leq \mob_\delta \leq 1$ uniformly in
$\delta$. For $\lambda = 0$ the bound is immediate as $\pf_h^{(0)} = \pf_h^0$. Next we establish an a priori
bound for the coefficient vector $\mb{d}^{(\lambda)}$. Lemmas~\ref{lem:orthogonality}
and \ref{lem:uniformBound} then give the inequality chain
\begin{equation}\label{eq:banach}
  |\elem| |\mb{d}^{(\lambda)}|^2 \leq \norm{\pf_h^{(\lambda)}}_{L^2(\grid)}^2 \leq C_\pf[\pf_h^0,
\Cahn, |\grid|],
\end{equation}
uniformly in $\lambda \in [0,1]$ and $\delta \in (0,1)$. Setting $R = \sqrt{|\elem|^{-1}C_\pf}
+ 1$, the solution set $\{\mb{d}^{(\lambda)}: \lambda \in [0,1]\}$ is contained in the closed ball $\overline{B_R} \subset \R^N$ induced by the Euclidian norm in Eq.\eqref{eq:banach}.
By the Leray--Schauder theorem formulation in \cite[Theorem 11.3]{Gilbarg:2001},
there exists a fixed point, and thus, a solution $(\pf_h^{1,\delta}, \chem_h^{1,\delta})$
to the regularized problem in Eqs.~\eqref{eq:regch1}--\eqref{eq:regch2}.\par

We now establish existence for the degenerate case by passing to the limit $\delta
\to 0^+$. Consider a sequence $(\delta_i)_{i = 0}^\infty$ with $\delta_0 = \delta$ from the regularized problem,
$\delta_i \in (0,1)$ for $i \in \N$, and limit $\delta_i \to 0^+$. Since the uniform bound
$\norm{\pf_h^{1,\delta_i}}_{L^2(\grid)}^2 \leq C_\pf$ and the compact embedding of $\overline{B_R}
\subset \R^N$ established above are both $\delta$-independent, and the image $\mathcal{S}(\overline{B_R}:
\pf_h^{0})$ is compact in $\R^N$ by continuity of $\mathcal{S}$, the existence of
a sequence $(\delta_i)_{i = 0}^\infty$ is established such that $(\pf_h^{1,\delta_i},
\chem_h^{1,\delta_i}) \to (\pf_h^1, \chem_h^1)$ in $V_h^\polyord \times V_h^\polyord$.
It remains to verify that all terms in Eqs.\eqref{eq:regch1}--\eqref{eq:regch2} pass
to the limit. Since $\Pi_\grid^0$ is a continuous linear projection, $\Pi_\grid^0
\pf_h^{1,\delta_i} \to \Pi_\grid^0 \pf_h^1$ in $V_h^0$, and $\mob_{\delta_i}(s) =
\max\{1 - s^2, \delta_i\} \to \mob(s)$ pointwise for each $s \in \R$. For the harmonic
averages, the bound $\min\{\mob_{\delta_i}^+, \mob_{\delta_i}^-\} \leq \haver{\mob_{\delta_i}}
\leq 2\min\{\mob_{\delta_i}^+, \mob_{\delta_i}^-\}$ on each $\isec \in \Gamma_i$
gives $\haver{\mob_{\delta_i}} \to \haver{\mob}$. In particular, if $\min\{\mob^+,
\mob^-\} > 0$ this follows from continuity of $\haver{\cdot}$, while for $\min\{\mob^+,
\mob^-\} = 0$ the bound gives $\delta_i \leq \haver{\mob_{\delta_i}} \leq 2\delta_i$
which passes to the limit following a comparison, as $\delta_i \to 0^+$. Since all integrals
in $b$ are uniformly bounded, the dominated convergence theorem \cite[E.3 Theorem
5]{Evans:2010} yields convergence of $b(\cdot,\cdot,\cdot)$. The remaining terms
are continuous or polynomial in $\pf_h^{1,\delta_i}$ and thus pass to the limit.
Therefore $(\pf_h^1, \chem_h^1)$ solves Eqs.~\eqref{eq:ch1ndtDP}--\eqref{eq:ch2ndtDP}.
\end{proof}
\begin{remark}[Uniqueness]\label{rem:unique}
Uniqueness does not follow directly from Theorem \ref{thm:existence}; however, it was
proven to be $\dt$-independent for a similar formulation (see for instance the discrete
setting studied in \cite{Acosta:2021,Barett:1999}), and the Eyre splitting in Eq.~\eqref{eq:eyre}
gives unique solutions \cite{Eyre:1997}. We therefore rely on these results to allude
to uniqueness of the \swipDP scheme without providing a formal proof. Regardless,
in Section \ref{sec:experiments} we will still restrict ourselves to conservative
choices of $\dt$.
\end{remark}
Following the establishment of existence in Theorem \ref{thm:existence} and our discussion about uniqueness in Remark \ref{rem:unique} we proceed with the main results. In Section \ref{sec:dmp}, we describe how to obtain boundedness in $L_1^\infty$ for
solutions $\pf_h^1$ by applying a scaling limiter, while the cell-averaged bound
$|\proj_\grid^0 \pf_h^1| \leq 1$ a.e.\ in $\grid$ follows automatically from $|\proj_\grid^0
\pf_h^0| \leq 1$ a.e.\ in $\grid$. The existence result in Theorem \ref{thm:existence} for the
\swipDP scheme enables the application of this theory.


\section{Discrete maximum principle}\label{sec:dmp}
To obtain boundedness in $L_1^\infty$  for $\pf_h \in V_h^\polyord$ which are solutions to the \swipDP scheme, it suffices
to ensure boundedness in $L_1^\infty$ for the cell averages
$\Pi_\grid^0 \pf_h$ and subsequently apply a scaling limiter introduced in Section
\ref{sec:scalinglimiter} to enforce the point-wise bound.
However, this approach comes at the cost that a monotone energy dissipation
law can no longer be formally guaranteed for the limited scheme as we discuss in Remark \ref{rem:limitedEnergyDissipation},
although a discrete energy estimate remains available for $\polyord \geq 1$, wherein
the \swipDPL scheme is needed for a discrete maximum principle. However, for $\polyord = 0$ the \swipDP
scheme gives provable discrete maximum principle, mass conservation, and energy dissipation which means that it has provable
structure preservation.
\subsection{Scaling limiter}\label{sec:scalinglimiter}
The Zhang--Shu scaling limiter was first introduced in \cite{Zhang:2010} and successfully
applied
in \cite{Cheng:2013, twophase:18, Gunnarsson:2026, Liu:2024}.
The underlying idea is to rescale the phase-field on each element so that prescribed
minimum and maximum values are satisfied.
For $K \in \grid$, we denote by $\pf_K = {\pf_h}_{|K}$ the restriction of $\pf_h$
to element $K$ and define the projection operator
$\Pi_{s}: V_h^\polyord \longrightarrow V_h^\polyord$ by
\begin{equation}
  \label{eq:scalinglimiter}
  \sum_{K \in \grid} \int_{K} \Pi_s[\pf_K] \cdot \varphi  =  \sum_{K \in \grid}
\int_{K} \limpff \cdot \varphi
  \quad \forall \varphi \in V_h^\polyord
\end{equation}
with the scaled phase-field
\begin{equation}
  \label{eq:stabsol}
  \limpff(\ics) :=  \pfmean + \stabfactor_K \tilpf_{\elem},
\end{equation}
and the scaling factor $\stabfactor_K$ is defined as
\begin{equation}\label{eq:limiter}
  \stabfactor_K := \min_{\ics \in \hat{\elem}}\left \{ 1, \left |\frac{\pfmean
- \pf_{\min}}{\pfmean - \pf_K(\ics)} \right |, \left |\frac{\pfmean - \pf_{\max}}{\pfmean
- \pf_K(\ics)}\right | \right \}
\end{equation}
where $\hat{\elem}$ is a sampling set, in principle taken as $\overline{K} := \elem
\cup \partial \elem$,
but in practice is typically replaced by a finite set of evaluation points for computational
efficiency -- for instance, one may take $\hat{K}$ as the set of quadrature
points $\Lambda_K \subset \overline{\elem}$ or $\Lambda_\elem = \partial \elem$ for $p = 1$ such as in \cite{Gunnarsson:2026}. Since we seek a phase-field $\pf_h \in L_1^\infty$,
we set $\pf_{\min} = -1$ and $\pf_{\max} = 1$.
\begin{lemma}[Scaling limiter bounds] \label{lem:scalinglimiterbounds}
Suppose that $\pfmean \in L_1^\infty(\elem)$ for each $\elem \in \grid$ and that
$\pf_K$ is not necessarily bounded in $L_1^\infty(\elem)$ for some $\elem \in \grid$. Then $\limpff \in L_1^\infty(\elem)$.
\end{lemma}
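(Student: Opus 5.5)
The plan is a direct pointwise argument on a fixed element $\elem\in\grid$, using only the definitions in Eqs.~\eqref{eq:stabsol}--\eqref{eq:limiter} with $\pf_{\min}=-1$ and $\pf_{\max}=1$. Fix $\ics\in\hat{\elem}$; in principle $\hat\elem=\overline{\elem}$, and we treat that case first. Write
\begin{equation*}
\limpff(\ics)-\pfmean=\stabfactor_K\bigl(\pf_K(\ics)-\pfmean\bigr).
\end{equation*}
Since $\stabfactor_K\in[0,1]$ by construction (a minimum including $1$, with all other entries nonnegative), $\limpff(\ics)$ lies on the segment between $\pfmean$ and $\pf_K(\ics)$.

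The argument then splits into cases. If $\pf_K(\ics)=\pfmean$, then $\limpff(\ics)=\pfmean\in[-1,1]$ by the hypothesis $\pfmean\in L_1^\infty(\elem)$. If $\pf_K(\ics)>\pfmean$, then $\limpff(\ics)\geq\pfmean\geq-1$. For the upper bound we use
\begin{equation*}
\stabfactor_K\leq\left|\frac{\pfmean-1}{\pfmean-\pf_K(\ics)}\right|=\frac{1-\pfmean}{\pf_K(\ics)-\pfmean},
\end{equation*}
where the equality holds because $\pfmean\leq1$. Hence $\limpff(\ics)\leq\pfmean+(1-\pfmean)=1$. The case $\pf_K(\ics)<\pfmean$ is symmetric, using the term with $\pf_{\min}=-1$ to obtain $\limpff(\ics)\geq -1$. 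Since $\ics$ was arbitrary, $|\limpff|\leq1$ on $\overline\elem$, so $\limpff\in L_1^\infty(\elem)$.

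There are two points of care. The first is well-definedness: the quotients in Eq.~\eqref{eq:limiter} are undefined where $\pf_K(\ics)=\pfmean$. We interpret such points as imposing no constraint (formally $+\infty$), which is harmless because those points are already covered by the first case. The second is degeneracy: if $|\pfmean|=1$ and $\pf_K$ is nonconstant, then $\stabfactor_K=0$ and $\limpff\equiv\pfmean$, which is still bounded.

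The main obstacle I expect is the practical choice of a finite sampling set $\hat\elem\subsetneq\overline\elem$. In that case the argument above only gives $|\limpff|\leq1$ at the sampled points. To conclude the claimed statement I would argue with $\hat\elem=\overline\elem$, as in the definition; the minimum is attained since $\pf_K$ is continuous on the compact set $\overline\elem$. I would then remark that for a finite sampling set the bound holds at the sampling points only, which suffices for quadrature-based evaluations.
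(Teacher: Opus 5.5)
Your proposal is correct and takes essentially the same approach as the paper: a local, pointwise bound on $\limpff(\ics)-\pfmean=\stabfactor_K\tilpf_\elem(\ics)$ using the ratios in Eq.~\eqref{eq:limiter}, with the case $|\pfmean|=1$ handled separately. Your one-sided case analysis supplies the step the paper leaves to \cite{Zhang:2010}, and your caveat that a finite sampling set $\hat\elem$ gives the bound only at the sampled points agrees with the paper's own proof, which concludes the bound only on $\hat\elem$.
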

\begin{proof}
Since the scaling limiter acts locally, it is enough to consider a single $\elem
\in \grid$ and assume that $\pfmean \in L_1^\infty(\elem)$ while $\pf_K$ is unbounded
in $L_1^\infty(\elem)$. The $|\pfmean| = 1$ case is trivial, suppose that $|\pfmean|
< 1$ and that $\pf_\elem$ is unbounded in $L_1^\infty(\elem)$. Then, by Eqs.~\eqref{eq:stabsol}
and \eqref{eq:limiter},
\begin{equation}
-\stabfactor_\elem |\tilpf_\elem(\ics)| \leq \limpff(\ics) - \pfmean  \leq \stabfactor_\elem
|\tilpf_\elem(\ics)|
\end{equation}
and thus $|\limpff(\ics)| \leq 1$ for almost all $\ics \in \hat{\elem}$. We refer
to the original paper \cite{Zhang:2010} for details.
\end{proof}
\begin{remark}[Bound equivalences]\label{rem:bounds}
As a direct consequence of Lemma \ref{lem:scalinglimiterbounds}, if $\Pi_\grid^0\pf_h$
is bounded in $L_1^\infty(\grid)$,
then the limited phase-field $\widehat{\pf_h}$ also satisfies $\norm{\widehat{\pf_h}}_{L^\infty(\grid)}
\leq 1$. Hence, to enforce the bound on the cell averages via the \swipDP scheme,
i.e., to verify that $\norm{\Pi_\grid^0 \pf_h}_{L^\infty(\grid)} \leq 1$ prior to applying
the limiter, then $|\widehat{\pf_h}| \leq 1$ a.e.\ in $\grid$.
\end{remark}
In view of Remark~\ref{rem:bounds}, it remains to verify that the limited scheme
\swipDPL, which applies the Zhang--Shu scaling limiter after each successful time
step of the \swipDP scheme, admits a valid initialization for the \swipDP scheme
in Eqs.\eqref{eq:ch1ndtDP}--\eqref{eq:ch2ndtDP} after limiting. Since the proof of
Theorem~\ref{thm:existence} relies on the bound in Lemma~\ref{lem:uniformBound},
it suffices to show that the energy remains bounded after applying the scaling limiter
to a phase-field $\pf_h$ whose energy $\mathcal{E}[\pf_h]$ is bounded.
\par
\begin{lemma}[Boundedness of limited energy]\label{lem:contraction}
Let $\pf_h \in V_h^\polyord$ and suppose that $\mathcal{E}[\pf^n_h] \in \R^+$. Then
$\mathcal{E}[\limpf] \in \R^+$.
\end{lemma}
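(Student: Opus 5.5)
We show that the two parts of $\mathcal{E}$, the potential term $\frac{1}{\Cahn}\int_\grid W(\cdot)\,d\ics$ and the interior-penalty term $\frac{\Cahn^2}{2}a(\cdot,\cdot)$, each remain finite and nonnegative after limiting. Nonnegativity is immediate. $W\geq 0$ pointwise, and $a(\vp,\vp)\geq 0$ for all $\vp\in V_h^\polyord$ by the coercivity of $a$, which follows from Theorem~\ref{thm:coercivity} with unit mobility and $\eta$ large enough. So $\mathcal{E}[\limpf]\geq 0$, and the content of the lemma is finiteness. Write the limited field elementwise as $\limpff=\pfmean+\stabfactor_\elem\tilpf_\elem$ with $\stabfactor_\elem\in[0,1]$, as in Eq.~\eqref{eq:stabsol}.

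For the potential term, we use Lemma~\ref{lem:scalinglimiterbounds}: when the cell averages of $\pf_h^n$ lie in $[-1,1]$, we have $|\limpff|\leq 1$, hence $W(\limpff)\leq \frac14$ and $\int_\grid W(\limpf)\,d\ics\leq |\grid|/4$. Without that assumption we still get a bound in terms of the original field. The $L^2$-orthogonality of Lemma~\ref{lem:orthogonality} gives
\[
\norm{\limpff}_{L^2(\elem)}^2=|\elem|\,\pfmean^2+\stabfactor_\elem^2\norm{\tilpf_\elem}_{L^2(\elem)}^2\leq \norm{\pf_\elem}_{L^2(\elem)}^2 .
\]
For the quartic part, equivalence of norms on the finite-dimensional space $\Po^\polyord(\elem)$ bounds $\norm{\limpff}_{L^4(\elem)}$ by a constant times $\norm{\limpff}_{L^2(\elem)}$. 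Combined with the $L^2$ bound above and Lemma~\ref{lem:uniformBound}-type control of $\norm{\pf_h^n}_{L^2}$ by $\mathcal{E}[\pf_h^n]$, this shows the potential term is finite.

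For the gradient and penalty term $a(\limpf,\limpf)$, we bound each piece separately. The volume part satisfies $\norm{\nabla\limpff}_{L^2(\elem)}=\stabfactor_\elem\norm{\nabla\pf_\elem}_{L^2(\elem)}\leq\norm{\nabla\pf_\elem}_{L^2(\elem)}$, because the cell average contributes no gradient. The jump part is the delicate one, since limiting can increase jumps. We use
\[
\vjump{\limpf}=\vjump{\Pi_\grid^0\pf_h^n}+\stabfactor^-\tilpf^- -\stabfactor^+\tilpf^+ .
\]
Then the trace inverse inequality bounds $\norm{\tilpf^\pm}_{L^2(\isec)}^2$ by $C h_\isec^{-1}\norm{\tilpf}_{L^2(\elem^\pm)}^2\leq Ch_\isec^{-1}\norm{\pf_h^n}_{L^2(\elem^\pm)}^2$. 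Since the cell averages are bounded by the $L^2$ norm, $\vjump{\Pi_\grid^0\pf_h^n}$ is controlled in the same way. The consistency terms $\aver{\nabla\limpf\cdot\nbold}\vjump{\limpf}$ are handled by Cauchy--Schwarz together with the same trace inequality used in the proof of Theorem~\ref{thm:coercivity}.

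Putting these together gives
\[
a(\limpf,\limpf)\leq C(h,\eta,\polyord)\bigl(\norm{\pf_h^n}_{L^2(\grid)}^2+\norm{\nabla\pf_h^n}_{L^2(\grid)}^2\bigr).
\]
By the inverse inequality, the right-hand side is at most $C'\norm{\pf_h^n}_{L^2(\grid)}^2\leq C'C_\pf$, which is finite because $\mathcal{E}[\pf_h^n]<\infty$.

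The main obstacle is the jump term. The limiter acts independently on neighbouring cells, so there is no monotonicity $a(\limpf,\limpf)\leq a(\pf_h^n,\pf_h^n)$. The bound must therefore go through $h$-dependent inverse and trace inequalities, and the resulting constant is finite but not uniform in $h$. This suffices for the lemma, since it only asserts finiteness at fixed $h$. It also explains why energy dissipation for the limited scheme is not claimed.
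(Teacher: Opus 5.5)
Your proof is correct and follows essentially the paper's route: $L^2$-contraction of the scaling limiter on each element, finite-dimensional norm equivalence for the quartic part of $W$, $h$-dependent inverse and trace estimates for $a$, and Lemma~\ref{lem:uniformBound}-type control of $\norm{\pf_h^n}_{L^2(\grid)}$ by the energy. The paper is shorter on the $a$-term, because it applies $a(v,v)\le C_{\mathrm{inv}}\max_{\isec} h_\isec^{-2}\norm{v}_{L^2(\grid)}^2$ directly to $v=\limpf\in V_h^\polyord$ and then uses the contraction, so your explicit jump decomposition is not needed; you also check nonnegativity, which the paper leaves implicit.
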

\begin{proof}
We consider only the case $\polyord > 0$ since for $\polyord = 0$, the result is trivial as we do not perform limiting. For $\polyord > 0$, first, we note that $\norm{\widehat{\pf_h}}_{L^2(\elem)} \leq \norm{\pf_h}_{L^2(\elem)}$
for each $\elem \in \grid$ by construction of the scaling limiter, which implies
that $\widehat{\pf_h} \in L^2(\grid)$ if $\pf_h \in L^2(\grid)$. By an argument analogous
to that in the proof of Lemma~\ref{lem:uniformBound}, and owing to the equivalence
of norms in finite-dimensional spaces, there exists a constant $C[\grid, h] > 0$
such that
\begin{equation}
  \int_\grid W(\widehat{\pf_h}) d\ics \leq \sum_{\elem \in \grid} \frac{C[\grid,h]\norm{\widehat{\pf_h}}^4_{L^2(\elem)}}{4}
+ \frac{|\elem|}{4} \leq \sum_{\elem \in \grid} \frac{C[\grid,h] \norm{\pf_h}^4_{L^2(\elem)}}{4}
+ \frac{|\elem|}{4},
\end{equation}
for some constant $C[\grid, h] > 0$ depending on the grid $\grid$ and mesh size $h$,
and is bounded since $\pf_h \in L^2(\grid)$ by Lemma~\ref{lem:uniformBound}. Similarly
for $a$, due to equivalences of norms in finite-dimensional spaces and using the trace inequality, there exists a constant $C_{\mathrm{inv}} > 0$ such that
\begin{equation}
  a(\widehat{\pf_h}, \widehat{\pf_h}) \leq \max_{\isec \in \grid} \left\{ h_\isec^{-2}\right\} C_{\mathrm{inv}} \norm{\widehat{\pf_h}}^2_{L^2(\grid)}
\leq  \max_{\isec \in \grid} \left\{ h_\isec^{-2} \right\} C_{\mathrm{inv}} \norm{\pf_h}^2_{L^2(\grid)}.
\end{equation}
Hence, each term in the energy $\mathcal{E}[\widehat{\pf^n_h}]$ is bounded, and consequently
$\mathcal{E}[\limpf] \in \R^+$.
\end{proof}
\begin{lemma}[Limited mass conservation]\label{lem:massConservationL}
Suppose that $\pf_h \in V_h^\polyord$ and let $\limpf$ be the limited version of $\pf_h$.
Then $\int_\grid \widehat{\pf_h} d\ics = \int_\grid \pf_h d\ics$.
\end{lemma}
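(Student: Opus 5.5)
The plan is to reduce the statement to a cell-by-cell identity and then sum over the mesh. Since the limiter acts locally, it suffices to show $\int_\elem \limpff \, d\ics = \int_\elem \pf_\elem \, d\ics$ for every $\elem \in \grid$. The global statement then follows by summing over $\elem \in \grid$, because the integral over $\grid$ is the sum of the element integrals.

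On a fixed element I would use the decomposition in Eq.~\eqref{eq:stabsol}, $\limpff = \pfmean + \stabfactor_\elem \tilpf_\elem$. The key observation is that $\stabfactor_\elem$ from Eq.~\eqref{eq:limiter} is a single scalar per element: it is a minimum over the sampling set $\hat{\elem}$, so it does not depend on $\ics$. Linearity of the integral then gives
\begin{equation*}
\int_\elem \limpff \, d\ics = |\elem|\,\pfmean + \stabfactor_\elem \int_\elem \tilpf_\elem \, d\ics .
\end{equation*}
By the zero-mean decomposition, which follows from Lemma~\ref{lem:orthogonality} and Definition~\ref{def:cellavgproj}, we have $\int_\elem \tilpf_\elem \, d\ics = \int_\elem (\pf_\elem - \Pi_\elem^0 \pf_\elem)\, d\ics = 0$. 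Hence $\int_\elem \limpff \, d\ics = |\elem|\,\pfmean = \int_\elem \pf_\elem \, d\ics$. The degenerate cases are harmless. If $\pf_\elem(\ics) = \pfmean$ at some sampling point, the ratio in Eq.~\eqref{eq:limiter} is interpreted as $+\infty$, so $\stabfactor_\elem \in [0,1]$ remains well defined. In any case, the scalar $\stabfactor_\elem$ multiplies a zero-mean function, so its value is irrelevant.

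Next I would account for the projection $\Pi_s$ in Eq.~\eqref{eq:scalinglimiter}. Since $\limpff \in \Po^\polyord(\elem)$ already, the $L^2$ projection onto $V_h^\polyord$ reproduces it. Alternatively, one can test Eq.~\eqref{eq:scalinglimiter} with $\vp = \chi_\elem \in V_h^\polyord$, which gives $\int_\elem \Pi_s[\pf_\elem] = \int_\elem \limpff$ directly. Either route shows that the limited field and $\limpff$ have the same cell integrals.

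The only real subtlety is making sure $\stabfactor_\elem$ is constant on $\elem$ rather than pointwise. This is precisely what distinguishes the scaling limiter from a pointwise clipping, which would not conserve mass. For $\polyord = 0$ the statement is trivial, since no limiting is performed and $\tilpf_\elem = 0$.
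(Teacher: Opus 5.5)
Your proof is correct and follows essentially the same route as the paper, which also uses that $\stabfactor_\elem$ is constant on each element, together with Eq.~\eqref{eq:stabsol} and the zero-mean property from Lemma~\ref{lem:orthogonality}, to obtain $\int_\elem \limpff \, d\ics = |\elem|\,\pfmean$ and then sums over $\grid$. Your remarks on the projection $\Pi_s$ and on degenerate sampling points are harmless additions that the paper leaves implicit.
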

\begin{proof}
Since $\stabfactor_K$ is constant on each element $K \in \grid$ by construction in
Eq.~\eqref{eq:limiter}:
\begin{equation}
  \int_\grid \widehat{\pf_h} d\ics = \sum_{\elem \in \grid} \int_\elem \pfmean +
\stabfactor_K \tilpf_\elem d\ics = \sum_{\elem \in \grid} |\elem| \pfmean = \int_\grid
\pf_h d\ics,
\end{equation}
where each equality follows from Eq.\eqref{eq:stabsol} and Lemma \ref{lem:orthogonality}.
\end{proof}
\subsection{Cell-averaged bounds} For $n \geq 0$, the proof of
Theorem~\ref{thm:existence} only requires that the energy at the previous step satisfies
$\mathcal{E}[\pf_h^n] \in \R^+$. Therefore, the same argument shows existence of
solutions to the \swipDPL scheme, in which the previous solution $\pf_h^n$ is replaced
by the limited version $\widehat{\pf_h^n}$. Since $\mathcal{E}[\widehat{\pf_h^n}]
\in \R^+$ whenever $\mathcal{E}[\pf_h^n] \in \R^+$ and $m_{\widehat{\pf_h^n}} = m_{\pf_h^n}$
by Lemmas~\ref{lem:contraction} and \ref{lem:massConservationL}, respectively, mass
is preserved under limiting and the energy remains bounded.
In Theorem \ref{thm:boundedVhk} we show that the cell averages $\proj_\grid^0 \pf_h^n$
for $n \geq 0$ remain bounded in $L_1^\infty$ for the \swipDP scheme if $|\proj_\grid^0
\pf_h^0| \leq 1$ a.e., which in turn implies that the limited scheme \swipDPL yields
boundedness in $L_1^\infty$ for limited solutions $\widehat{\pf_h^n} \in V_h^\polyord$.
\begin{theorem}[Boundedness in $V_h^\polyord$ for \swipDPL]\label{thm:boundedVhk}
Suppose that the initial phase-field $\pf_h^0 \in V_h^\polyord$ satisfies
 $\norm{\proj_\grid^0 \pf_h^0}_{L^\infty(\grid)} \leq 1$. Then the cell-averaged solution
$\Pi_\grid^0 \pf_h^n$ of the \swipDP scheme in Eqs.~\eqref{eq:ch1ndtDP}--\eqref{eq:ch2ndtDP}
satisfies $\norm{\Pi_\grid^0
\pf_h^{n+1}}_{L^\infty(\grid)} \leq 1$ for all $n \geq 0$, unconditionally. Consequently,
the limited version $\widehat{\pf_h^{n+1}}$ of the solution $\pf_h^{n+1}$ to the \swipDPL scheme
satisfies $\norm{\widehat{\pf_h^{n+1}}}_{L^\infty(\grid)} \leq 1$ for all $n \geq 0$.
\end{theorem}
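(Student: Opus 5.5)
The plan is to prove the cell-average bound by induction on $n$, testing Eq.~\eqref{eq:ch1ndtDP} with the element indicator $\vp = \chi_\elem \in V_h^0 \subset V_h^\polyord$ for a fixed $\elem \in \grid$. Since $\chi_\elem$ is piecewise constant, $\nabla \chi_\elem = 0$ on every element. Hence the volume term of $b$ in Eq.~\eqref{eq:diffmob} and the term $\aver{\nabla \vp}\vjump{\chem_h}$ both vanish. The jump $\vjump{\chi_\elem}$ is nonzero only on faces $\isec \subset \partial \elem \cap \Gamma_i$, where it equals $\pm 1$. On boundary faces the homogeneous Neumann condition Eq.~\eqref{eq:ch5} means no flux contribution is present.

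By Definition~\ref{def:cellavgproj}, $\innerProd{\pf_h^{n+1}-\pf_h^n}{\chi_\elem} = |\elem|\bigl(\pfmean[n+1] - \pfmean[n]\bigr)$. The tested equation therefore becomes the finite-volume-type update
\begin{equation*}
  |\elem|\bigl(\pfmean[n+1] - \pfmean[n]\bigr) = -\frac{\dt}{\Peclet} \sum_{\isec \subset \partial\elem \cap \Gamma_i} \pm\int_\isec \haver{\mob(\Pi_\grid^0 \pf_h^{n+1})}\Bigl(\frac{\pena}{h_\isec}\vjump{\chem_h^{n+1}} - \aver{\nabla \chem_h^{n+1}\cdot\nbold}\Bigr)\, ds .
\end{equation*}
Every flux term carries the factor $\haver{\mob(\Pi_\grid^0\pf_h^{n+1})}$, and this factor vanishes on every face of $\elem$ as soon as $\mob(\pfmean[n+1]) = 0$. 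I would fix the convention that $\haver{\cdot} := 0$ whenever one argument vanishes, which also covers the $0/0$ case; this is consistent with the bound $\haver{\mob} \leq 2\min\{\mob^+,\mob^-\}$ used in the existence proof. Since $\mob(s) = \max\{1-s^2,0\} = 0$ precisely when $|s| \geq 1$, we get the key observation: if $|\pfmean[n+1]| \geq 1$, then $\pfmean[n+1] = \pfmean[n]$.

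The induction step then argues by contradiction. Assume $\norm{\Pi_\grid^0 \pf_h^n}_{L^\infty(\grid)} \leq 1$, and suppose some $\elem^\star$ has $|\pfmeanstar[n+1]| > 1$. The key observation gives $\pfmeanstar[n+1] = \pfmeanstar[n]$, whose modulus is at most $1$ by the induction hypothesis, a contradiction. No restriction on $\dt$ enters, so the bound holds unconditionally. The existence of $\pf_h^{n+1}$ is supplied by Theorem~\ref{thm:existence}.

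For the limited scheme \swipDPL, the previous step is replaced by $\widehat{\pf_h^n}$. By Eq.~\eqref{eq:stabsol} and Lemma~\ref{lem:orthogonality}, the limiter leaves cell averages unchanged, $\Pi_\grid^0\widehat{\pf_h^n} = \Pi_\grid^0 \pf_h^n$. The induction hypothesis therefore transfers verbatim to the limited initialization, and Lemmas~\ref{lem:contraction} and \ref{lem:massConservationL} keep existence available at every step. Finally, Lemma~\ref{lem:scalinglimiterbounds} together with Remark~\ref{rem:bounds} turns the cell-average bound into $\norm{\widehat{\pf_h^{n+1}}}_{L^\infty(\grid)} \leq 1$.

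The main obstacle I expect is justifying rigorously that the face contributions vanish. This requires pinning down the harmonic-average convention at degenerate faces, in particular when both neighbours have zero mobility, and making sure the Neumann boundary faces contribute nothing. Once that is settled, the argument reduces to a one-line contradiction.
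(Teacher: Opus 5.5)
Your proposal is correct and follows essentially the paper's argument. You test Eq.~\eqref{eq:ch1ndtDP} with a piecewise-constant function supported on the offending cell, use that the implicit cell-averaged mobility $\mob(\pfmeanstar[1])=0$ kills every face flux of $b$ through the harmonic average, and conclude the cell average is unchanged, contradicting the induction hypothesis. The paper instead uses $(\pfmeanstar[1]-1)_\oplus\chi(\elem^\star)$ with $\elem^\star$ the global maximiser in order to invoke the squeeze $\mob(\pfmeanstar[1]) \leq \haver{\mob} \leq 2\mob(\pfmeanstar[1])$, whereas your plain indicator $\chi_\elem$, your $0/0$ convention and your check that the limiter preserves cell averages handle both bounds at once, somewhat more cleanly.
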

\begin{proof}
We proceed by contradiction and with the assumption that
$\norm{\Pi_\grid^0 \pf_h^0}_{L^\infty(\grid)} \leq 1$. Suppose, for the sake
of contradiction, that there exists an element $\elem^\star \in \grid$ such that
\begin{equation}
  \pfmeanstar[1] = 1 + \varepsilon = \max_{\elem \in \grid} \pfmean[1],
\end{equation}
for some $\varepsilon > 0$ given a solution $\pf_h^1 \in V_h^\polyord$ to the \swipDP scheme in Eqs.\eqref{eq:ch1ndtDP}--\eqref{eq:ch2ndtDP} for $n = 0$. The rest follows by induction. We test Eq.~\eqref{eq:ch1ndtDP} with
the test function
\begin{equation}\label{eq:testfunc}
  \vp = (\pfmeanstar[1] - 1)_\oplus \, \chi(\elem^\star) \in V_h^0 \subset
V_h^\polyord,
\end{equation}
to obtain
\begin{equation}
  \frac{|\elem^\star|}{\dt} \innerProd{\pfmeanstar[1] - \pfmeanstar[0]}{(\pfmeanstar[1]
- 1)_\oplus} = -b(\mob(\Pi_\grid^0 \pf_h^1), \chem_h^1, (\pfmeanstar[1] -
1)_\oplus \,\chi(\elem^\star)),
\end{equation}
where, by the choice of the test function $\varphi$ in Eq.~\eqref{eq:testfunc},
\begin{eqnarray}
  &b&(\mob(\Pi_\grid^0 \pf_h^1), \chem_h^1, (\pfmeanstar[1] - 1)_\oplus \,\chi(\elem^\star))
\noindent \\
  &=& \sum_{\isec \in \partial \elem^\star \cap \Gamma_i} \int_\isec \haver{\mob(\Pi_\grid^0
\pf_h^1)} (\frac{\pena}{h_\isec} \vjump{\chem_h^1} - \aver{\nabla \chem_h
\cdot \mathbf{n}^+}) (\pfmeanstar[1] - 1)_\oplus ds,
\end{eqnarray}
Since $\pfmeanstar[1] = \underset{\elem \in \grid}{\max} \pfmean[1]$
and, consequently, $\mob(\pfmeanstar[1]) = \underset{\elem \in \grid}{\min} \mob(\pfmean[1])$,
the harmonic average satisfies
\begin{equation}\label{eq:squeeze}
  \mob(\pfmeanstar[1])\leq \haver{\mob(\Pi_\grid^0 \pf_h^1)} \leq 2\mob(\pfmeanstar[1]).
\end{equation}
Using the comparison in Eq.\eqref{eq:squeeze} it follows from $\mob(\pfmeanstar[1])
= 0$ that
\begin{equation}
  b(\mob(\Pi_\grid^0 \pf_h^1), \chem_h^1, (\pfmeanstar[1] - 1)_\oplus \,\chi(\elem^\star))
= 0,
\end{equation}
which gives
\begin{equation}\label{eq:boundCriteria}
  \frac{|\elem^\star|}{\dt} (\pfmeanstar[1] - \pfmeanstar[0])(\pfmeanstar[1]
- 1)_\oplus = 0.
\end{equation}
Eq.\eqref{eq:boundCriteria} implies that
$\pfmeanstar[1] = \pfmeanstar[0]$ or $(\pfmeanstar[1]
- 1)_\oplus = 0$, either of which contradicts the assumption that $\pfmeanstar[1]
= 1 + \varepsilon$ for some $\varepsilon > 0$. We conclude that $\norm{\Pi_\grid^0
\pf_h^1}_{L^\infty(\grid)} \leq 1$, and by induction, $\norm{\Pi_\grid^0
\pf_h^{n+1}}_{L^\infty(\grid)} \leq 1$ for all $n \geq 0$. Finally, since $\norm{\Pi_\grid^0 \pf_h^{n+1}}_{L^\infty(\grid)}
\leq 1$, we obtain from Lemma \ref{lem:scalinglimiterbounds} that $\norm{\widehat{\pf_h^{n+1}}}_{L^\infty(\grid)}
\leq 1$ for all $n
\geq 0$. An analogous argument applies for the global minimum.
\end{proof}
\begin{remark}[Advection Cahn--Hilliard]\label{rem:convectiveBoundedness}
For the advective Cahn--Hilliard equations with a velocity field $\mbu \in H^1(\text{div},
\grid)$, adding $c: H^1_0(\text{div}, \grid) \times V_h^0 \times V_h^p \to \R$:
\begin{eqnarray}
  c(\mbu, \proj_\grid^0 \pf_h, \varphi) &=& \int_{\grid} \mbu \cdot \nabla \varphi
\proj^0_\grid \pf_h d\ics \\
  &-& \sum_{\isec \in \Gamma_i} \int_\isec \left( \aver{\mbu \cdot \mathbf{n}^+}_{\oplus}
(\proj_\grid^0 \pf_h)^{+}
+ \aver{\mbu \cdot \mathbf{n}^+}_{\ominus} \proj_\grid^0 \pf_h    \right) ds,
\end{eqnarray}
to Eq.~\eqref{eq:ch1ndtDP}, the same discrete boundedness result holds for the \swipDPL
scheme owing to upwinding. The proof follows by a similar comparison as used in Theorem~\ref{thm:boundedVhk},
a similar result can be found in \cite[Proposition 2.8]{Acosta:2021}.
\end{remark}
\begin{remark}[Limited energy dissipation]\label{rem:limitedEnergyDissipation}
While the unlimited \swipDP scheme satisfies monotone energy dissipation -- that is,
 $\mathcal{E}[\pf_h^{n+1}] \leq \mathcal{E}[\pf_h^n] \leq  \ldots \leq \mathcal{E}[\pf_h^0]$
 for a sequence of phase-fields $(\pf_h^n)_{n \geq 0}$
 which are solutions to the \swipD scheme in Eqs.\eqref{eq:ch1ndtDP}--\eqref{eq:ch2ndtDP} for $n > 0$
 by a simple induction argument based on Theorem~\ref{thm:energyDPL} -- the
same cannot be established directly for the limited scheme
 \swipDPL (or any other limited scheme) due to the interaction of the limiter coefficients
$\stabfactor_\elem$ with the
 penalty and consistency terms
  and the data-dependent quantities in $W$. A practical trade-off thus arises: for
$\pf_h \in V_h^0$, both a discrete maximum principle and energy dissipation can be
 proven,
and with
 $\polyord > 0$, boundedness is guaranteed under the limiter but energy dissipation
is not. One possible
remedy is to study the optimization problem associated with the scaling limiter,
namely to find $\tilde{\alpha}_\elem \in [0, \stabfactor_\elem]$ satisfying
\begin{equation}
  \mathcal{E}[\sum_{\elem \in \grid} \pfmean[n+1] + \tilde{\alpha}_\elem \tilpf^{n+1}_\elem]
\leq \mathcal{E}[\widehat{\pf_h^n}], \qquad n \geq 0,
\end{equation}
since these are the steps at which the energy is evaluated.
Nevertheless, it remains unclear how to enforce such a constraint, as it constitutes
a global condition on a non-convex problem. Numerical experiments, however, suggest
that this is generally not an issue, and in particular, that energy dissipation is
still observed in practice as is found and discussed in Section \ref{sec:experiments}.
\end{remark}
\subsection{Comparison to other schemes}
Drawing on the numerical observations reported in \cite{Gunnarsson:20262,Gunnarsson:2026},
we summarize below the advantages of the proposed \swipDPL scheme:
\begin{enumerate}
  \item Both the proposed \swipDPL scheme and the schemes in \cite{Liu:2024} admit
a provable discrete maximum principle for arbitrary polynomial order $p$. However,
the scheme in \cite[Section 1.5]{Liu:2024} relies on a two-step limiter, which consists
of solving a DG scheme, then solving a constrained convex problem, and finally applying
a Zhang--Shu scaling limiter. In contrast, the \swipDPL scheme requires only a single
application of the Zhang--Shu scaling limiter after each successful time step, which
is a more efficient procedure. Moreover, it is unclear whether the scheme in \cite{Liu:2024}
satisfies an energy law, see for instance Remark \ref{rem:limitedEnergyDissipation}
for more details, or to what extent it remains competitive in that regard.
  \item As noted above, the proposed scheme shares the property with \cite{Acosta:2021}
of providing boundedness for the cell-averaged solution $\Pi_\grid^0 \pf_h$ of the
\swipDP scheme, which in turn yields a discrete maximum principle for the limited
scheme \swipDPL. However, the scheme in \cite{Acosta:2021} is defined only for $\pf_h
\in V_h^0$, whereas \swipDPL accommodates $\pf_h \in V_h^\polyord$ with arbitrary
polynomial order $\polyord$ while retaining the bound-preserving property. The formulation
in \cite{Acosta:2021, Acosta:2025} (and related applications in \cite{Acosta:2025})
is notably elegant in that one can directly establish a discrete maximum principle
and energy laws even for a coupled scheme. We defer to future work the proof of analogous
results for the \swipDPL scheme, whose analysis is more involved due to the necessity
of employing a limiter for boundedness in Theorem~\ref{thm:boundedVhk} for $p > 0$
and the uncertainty regarding energy dissipation discussed in Remark \ref{rem:limitedEnergyDissipation}.
  \item For the \femL, \sipgL, and \swipL  schemes in \cite{Gunnarsson:2026}, it is
unclear whether the procedure satisfies a discrete maximum principle or an energy
law; only numerical evidence is provided. It is also unclear how to properly treat the coercivity constant, as can be seen by comparing Theorem \ref{thm:coercivity}
to \cite[Theorem 3.2]{Gunnarsson:2026}. A similar remark holds for the \swipDL scheme
in \cite[Theorem 2]{Gunnarsson:20262} for $p>0$. In contrast, the coercivity result in Theorem
\ref{thm:coercivity} does not require explicit estimates of a data-dependent constant
to ensure semi-positivity of $\tilde{b}(\upsilon_h, \upsilon_h)$ for $\upsilon_h \in V_h^\polyord$.
\end{enumerate}

\section{Numerics}\label{sec:numerics}
We begin by stating a remark regarding some difficulties from
translating the theoretical results to the numerical setting, which may affect the
results in practice.
\begin{remark}[Numerical considerations and limitations]\label{rem:numericalConsiderations}
Below we outline some key numerical considerations and limitations of the proposed
scheme which may affect the results in practice:
\begin{enumerate}
    \item The \swipDP scheme is almost fully implicit and can therefore be computationally
expensive. Moreover, since the proof of Theorem~\ref{thm:boundedVhk} relies on strict
degeneracy of the mobility, regularization cannot be used without potentially compromising
the discrete maximum principle of the \swipDPL scheme. However, without a small regularization
parameter, the resulting nonlinear system might become ill-conditioned due to the
degeneracy of the mobility.
    \item Similarly, standard solvers for the nonlinear system require a finite tolerance
$\tol$ for both the Newton and linear iterations, which may also affect the discrete
maximum principle of the \swipDPL scheme.
    \item In our Zhang--Shu scaling limiter implementation, we add a tolerance $\tol_{\lim}
= 5 \cdot 10^{-16}$ to avoid division by zero in Eq.~\eqref{eq:limiter}. We also artificially set $\stabfactor_\elem = 0$ if $|\pfmean|
> 1 - \tol_{\text{avg}}$, with a threshold of $\tol_{\text{avg}} = 10^{-14}$ to limit numerical overflow.
\end{enumerate}
\end{remark}
In view of Remark~\ref{rem:numericalConsiderations}, we also note that the \swipDPL
scheme does not necessarily satisfy a monotone energy dissipation law for polynomial
order $\polyord > 1$ due to the scaling limiter, as noted in Remark~\ref{rem:limitedEnergyDissipation}.
However, this property is observed in practice as seen in Figs. \ref{fig:dataSP}
and \ref{fig:dataMerging}.


\subsection{Software implementation}
The numerical simulations in Section \ref{sec:experiments} are performed using the
software package \dunefem \cite{dunereview:21, dune_v210:25, dunefemdg:21} at version $2.11$ with
its \code{Python} interface \cite{Dedner:2020}. We use the limiter implementation
from \cite{dunefemdg:21} in \dunefemdg, which is very similar to the experimental
routine in \cite{Gunnarsson:20262,Gunnarsson:2026}.


\subsection{Experiments}\label{sec:experiments}
\begin{example}[Trigonometric initial data]\label{ex:trig}
Consider the initial data in $\Omega = [0,1]^2$ given by:
\begin{equation}
    \pf_0(\mbx) = A_1\cos(4 \pi \mbx_1) \cos(4 \pi \mbx_2),
\end{equation}
for some amplitude $A_1 \in (0,1]$ and $\mbx = (\mbx_1, \mbx_2) \in \Omega$. We partition
the domain into $N \times N$ quadrilateral elements and set the parameters $\Cahn
= 0.1$, $\Peclet = 0.3$, $\dt = 10^{-3}$, and end time $T = 10^{-4}$. We pick a global penalty parameter $\eta = \max\{1, 3p(p+1)\}$.
\end{example}
We investigate the experimental order of convergence (EOC). At each refinement level,
we double the number of elements in each direction and halve the time increment, i.e., $N \to 2N$ and
$\dt \to \dt/2$, and compute the $L^2$ and $H^1$ errors at the final time $T$ with
respect to the stationary solution $\pf_I(\mbx, \cdot) = A_1\cos(4 \pi \mbx_1) \cos(4
\pi \mbx_2)$ of the forced CH equations, where the source term $S(\pf_I) = -\Peclet^{-1}\nabla
\cdot (\mob(\pf_I) \nabla (W'(\pf_I) - Cn^2 \Delta \pf_I))$ (see \cite{Wimmer:2025}
for further details) has been added to Eq.~\eqref{eq:ch1ndtDP}.
\begin{table}[htbp]
  \centering
  \caption{Example \ref{ex:trig}: $L^2$ and $H^1$ errors and EOC for $A = 0.1$ and
$A = 0.99$ and $\tau = T \cdot 2^{-\frac{N-20}{20}}$.}
  \label{tab:amp_joint}
  \footnotesize
  \begin{tabular}{l @{\hspace{0.6em}} c @{\hspace{0.5em}} c @{\hspace{0.5em}} c @{\hspace{0.5em}}
c @{\hspace{0.5em}} c @{\hspace{0.5em}} c @{\hspace{0.5em}} c @{\hspace{0.5em}} c
@{\hspace{0.5em}} c @{\hspace{0.5em}} c}
    \hline\hline
    & & \multicolumn{4}{c}{$A = 0.1$} & \multicolumn{4}{c}{$A = 0.99$} \\
    \cline{3-6} \cline{7-10}
    & & \multicolumn{2}{c}{$L^2$ Error} & \multicolumn{2}{c}{$H^1$ Error} & \multicolumn{2}{c}{$L^2$
Error} & \multicolumn{2}{c}{$H^1$ Error} \\
    Scheme, $p$ & $N$ & Error & EOC & Error & EOC & Error & EOC & Error & EOC \\
    \hline
                              &    40 &   6.40e-03 &    --- &        --- &    ---
&   6.34e-02 &    --- &        --- &    --- \\
                              &    80 &   3.21e-03 &   1.00 &        --- &    ---
&   3.17e-02 &   1.00 &        --- &    --- \\
    \swipDP, $0$              &   160 &   1.60e-03 &   1.00 &        --- &    ---
&   1.59e-02 &   1.00 &        --- &    --- \\
                              &   320 &   8.02e-04 &   1.00 &        --- &    ---
&   7.94e-03 &   1.00 &        --- &    --- \\
    \hline
                              &    40 &   2.61e-04 &    --- &   8.08e-02 &    ---
&   2.62e-03 &    --- &   8.01e-01 &    --- \\
                              &    80 &   6.52e-05 &   2.00 &   4.03e-02 &   1.00
&   6.46e-04 &   2.02 &   3.99e-01 &   1.00 \\
    \swipDPL, $1$              &   160 &   1.63e-05 &   2.00 &   2.01e-02 &   1.00
&   1.62e-04 &   2.00 &   1.99e-01 &   1.00 \\
                              &   320 &   4.08e-06 &   2.00 &   1.01e-02 &   1.00
&   4.04e-05 &   2.00 &   9.97e-02 &   1.00 \\
    \hline\hline
  \end{tabular}
\end{table}
Table \ref{tab:amp_joint} shows the EOC for the \swipDP and \swipDPL schemes with
polynomial orders $p = 0$ and $p = 1$, respectively, for two different amplitudes
$A = 0.1$ and $A = 0.99$. We observe that the \swipDP scheme with $\polyord = 0$ exhibits a first-order
convergence in the $L^2$ norm, while the \swipDPL scheme with $\polyord = 1$ exhibits a second-order
convergence in the $L^2$ norm and a first-order convergence in the $H^1$ norm, which
is consistent with the expected theoretical rates of convergence for these schemes.
Moreover, it is consistent with the provided observed EOC for a similar experiment
in \cite{Gunnarsson:20262} for the \swipDPL scheme for the same initial data structure and
similar parametrisation.
\begin{example}[Spinodal decomposition]\label{ex:spinodal}
We consider a triangulation in $\Omega = [0,1]^2$ and discretize it into $64 \times
64$ triangles. The initial data are given by $\pf_h^0 = 0.3 + \xi$, where $\xi \in
V_h^0$ is a random variable uniformly distributed in $[-0.01, 0.01]$ on each $\elem \in \grid$. We set the
parameters $\Cahn = 0.01$, $\Peclet = 1$, $\dt = 10^{-4}$, and end time $T = 0.05$. We pick a global penalty parameter $\eta = 6$.
\end{example}
\begin{figure}[h]
\centering
\includegraphics[width = 0.99\textwidth]{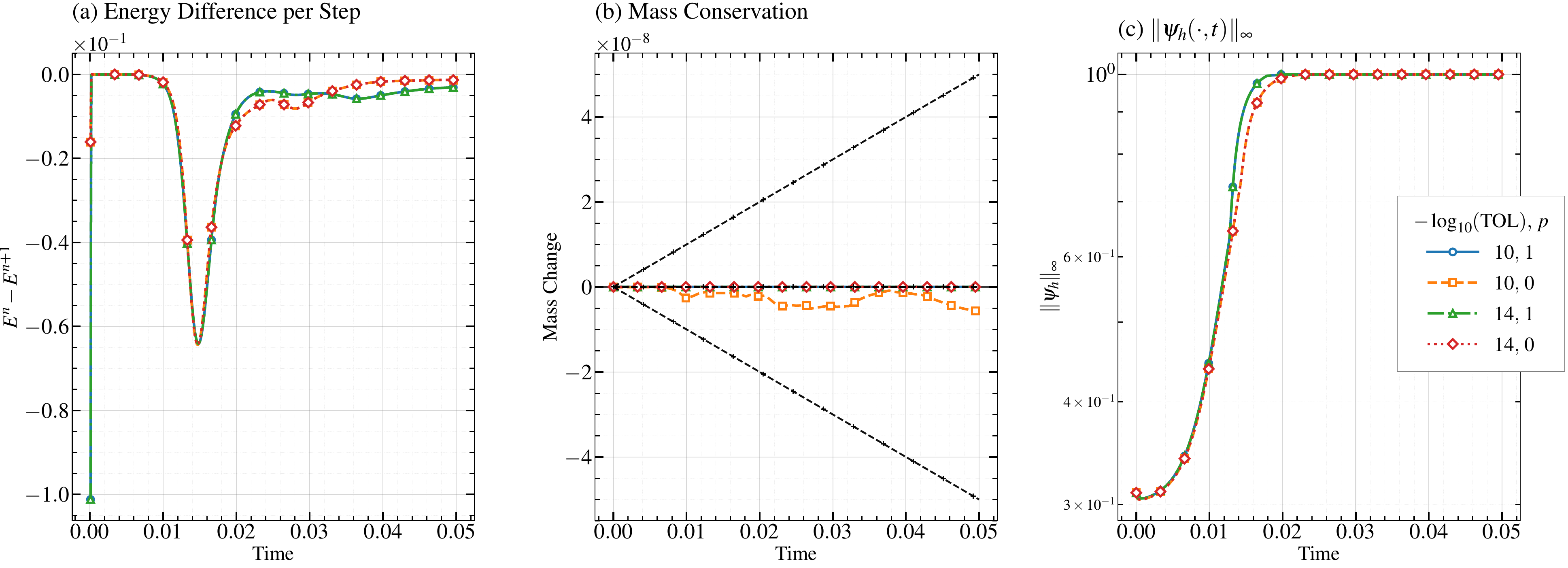} \label{fig:dataSP}
\caption{Example \ref{ex:spinodal}: Monotone energy dissipation, mass change (black line is $n\tol$), and
bounds.}
\end{figure}
\begin{figure}[h]
\centering
\includegraphics[width = 0.85\textwidth]{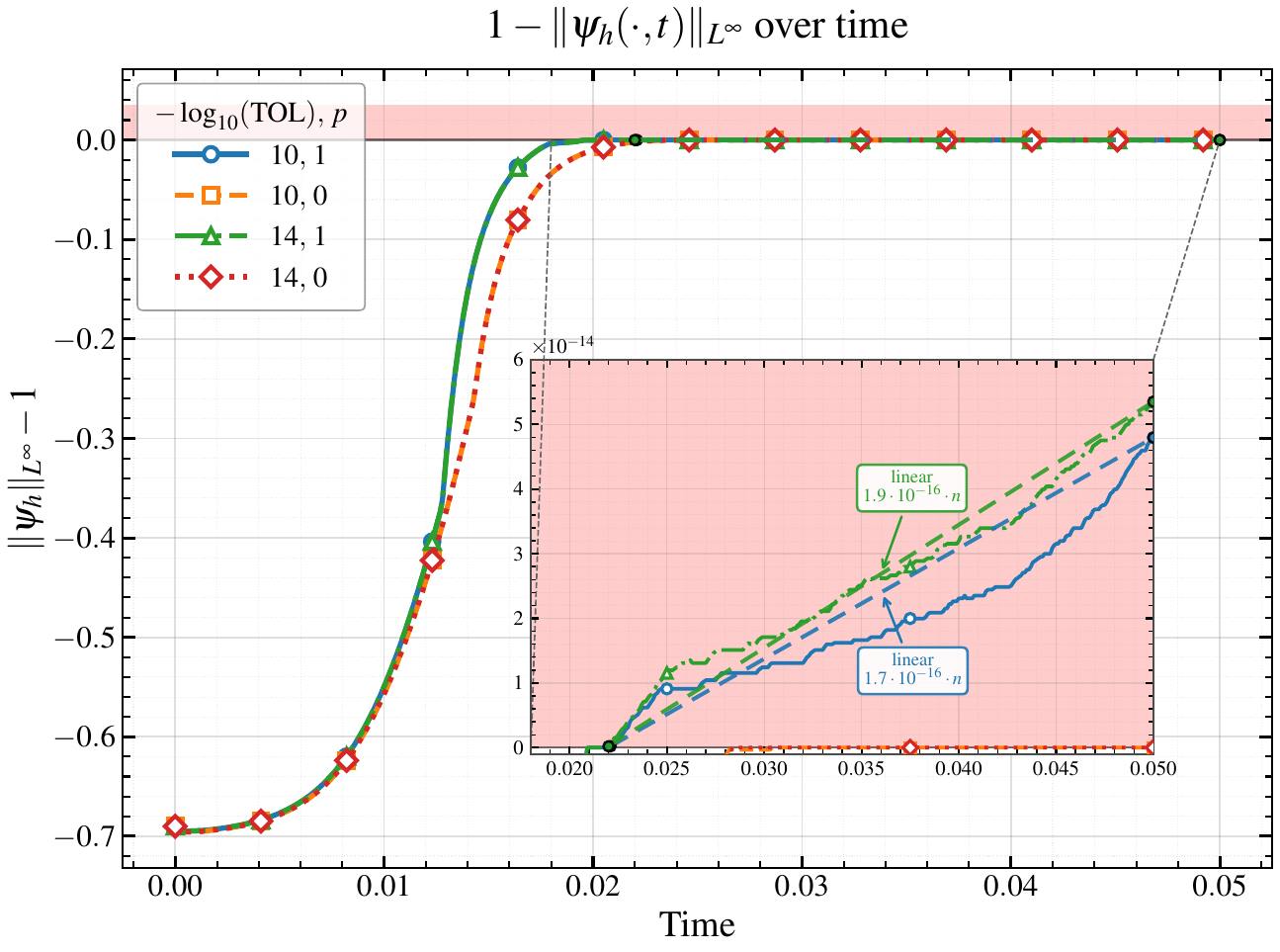} \label{fig:violation}
\caption{Example \ref{ex:spinodal}: Discrete maximum principle violation, we make
a linear fit to show that the error scales with computer precision in the
  floating point interval around $1$.}
\end{figure}
\begin{figure}[h]
\centering
\includegraphics[width = 0.32\textwidth]{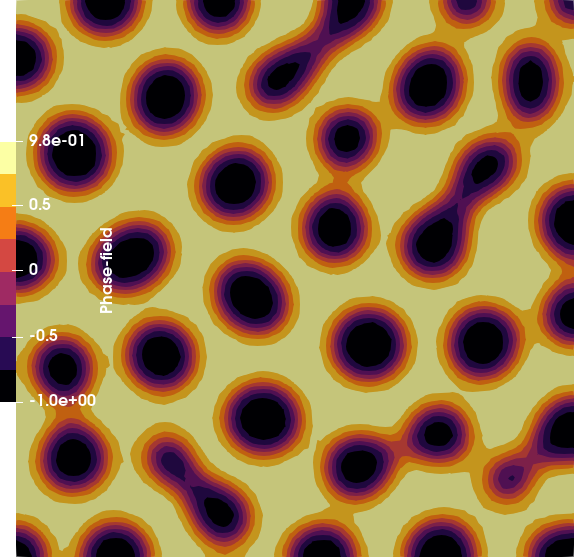}
\hfill
\includegraphics[width = 0.32\textwidth]{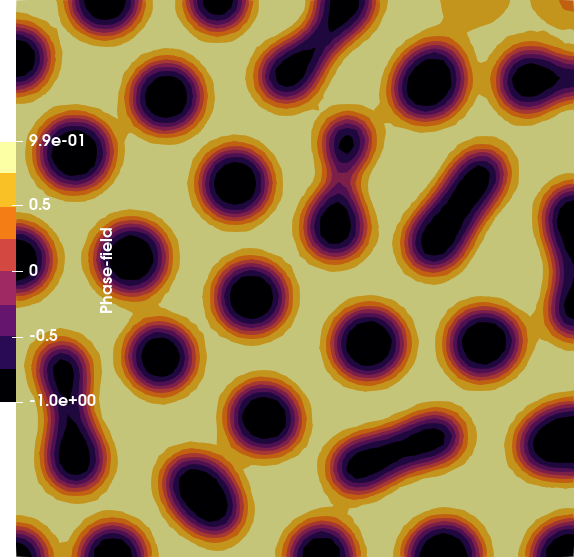}
\hfill
\includegraphics[width = 0.32\textwidth]{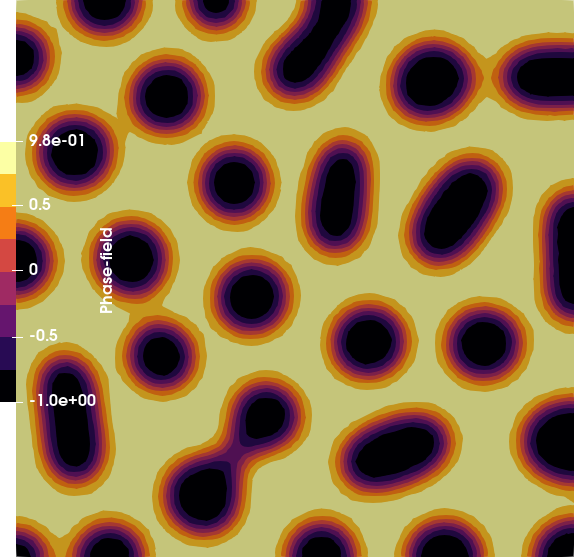}
\caption{Example \ref{ex:spinodal}: Snapshots of the phase-field at time steps $t
= \frac{T}{2}$, $t = \frac{3T}{4}$, and $t = T$ (left to right).}\label{fig:rand14}
\end{figure}
For Example \ref{ex:spinodal} we observe standard spinodal decomposition in Fig.~\ref{fig:rand14}
and use this example to discuss numerical constraints to properly fulfill Theorem
\ref{thm:boundedVhk} in practice. As can be seen in Fig. \ref{fig:violation} an exact
bound is not fully obtained for $p = 1$ while it is perfect for $p = 0$. Consequently,
we constructed a linear fit to show that the overall deviation scales as machine
tolerance and therefore breaks the structure of Lemma~\ref{lem:scalinglimiterbounds}
and its application in Theorem~\ref{thm:boundedVhk}. We see some dependency on the
non-linear tolerance $\tol$ but we note we cannot numerically guarantee a maximum
principle due to the presence of these tolerances and floating point errors in the
limiter. A solution would be to cut-off the small deviations, which would be more
appropriate than cut-off \fem (see for instance \cite{Gunnarsson:2026} regarding \femC) as the error scales
with the mass deviation. However, again, this could cause errors for the energy.
On the other hand, we observe that the mass violation is most prevalent for $p =
0$ with $\tol = 10^{-10}$, but less apparent for $p = 1$ at the same tolerance. Regardless,
the deviation is consistent with the discussions in \cite{Gunnarsson:20262,Gunnarsson:2026}
regarding observed mass violations and nonlinear tolerances. Interestingly, the energy
dissipation is monotone for both $p = 0$ and $p = 1$, verifying the expected energy
dissipation law despite the missing formal result for the limiter being a contraction on the energy
$\mathcal{E}$, as discussed in Remark~\ref{rem:limitedEnergyDissipation}.
\begin{example}[Merging bubbles]\label{ex:merging}
Consider the domain $\Omega_T = [0,1]^2$ with $N = 256 \times 256$ triangles.
The initial condition is given by a smooth profile
\begin{equation}
    \pf(\mbx,0) = (1 - A_2)\left(2\min\left\{\left(1 + 2^{-1}\sum_{j=1}^{2} \tanh\left(\frac{r
- \norm{ \mathbf{x} - \mathbf{c}_j}}{\sqrt{2}Cn}\right)
\right),1\right\} - 1\right),
\end{equation}
where $r = 0.2$ is the droplet radius, with
central points $\mathbf{c}_1 = (0.3,
0.5)^T$ and $\mathbf{c}_2 = (0.7, 0.5)^T$, $A_2 \in [0,1)$ is some scaling factor, Cahn number $\Cahn = \frac{1}{64}$, and Peclet
number $\Peclet = 1$. The simulation is run with $\dt = 5 \cdot 10^{-5}$ until $T
= 0.04$. We pick a global penalty parameter $\eta = 6$.
\end{example}
\begin{figure}[h]
\centering
\includegraphics[width = 0.99\textwidth]{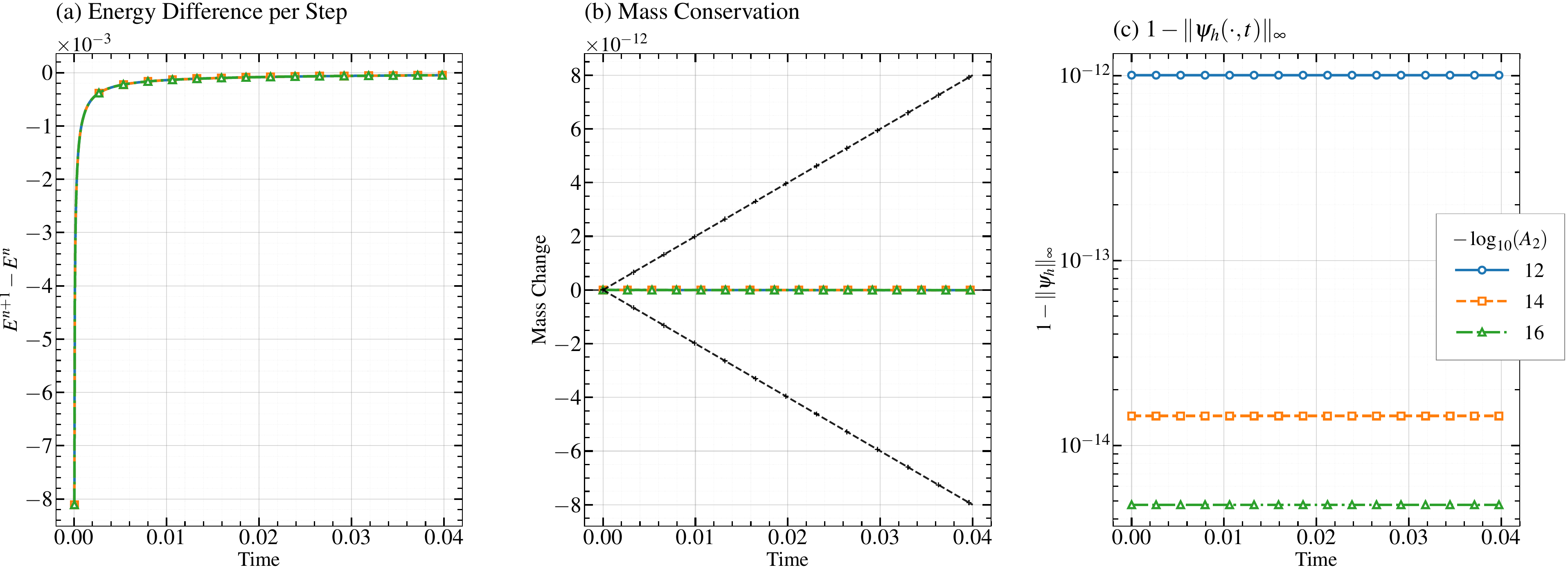}\label{fig:dataMerging}
\caption{Example \ref{ex:merging}: Monotone energy dissipation, mass change (black line is $n\tol$), and
bounds.}
\end{figure}
Figure \ref{fig:dataMerging} shows monotone energy dissipation, mass change, and
bounds for Example \ref{ex:merging} with $\polyord = 1$ and $\tol = 10^{-14}$. We note that
$\norm{\pf(\cdot,0)}_{L^\infty(\Omega)}$ is not necessarily equal to $\norm{\pf_h^0}_{L^\infty(\grid)}$
as the initial data $\pf(\cdot,0)$ is $L^2$-projected onto the discrete space, as can be seen in Figure \ref{fig:dataMerging}, which does not necessarily preserve the initial $L^\infty$ norm. We
observe results very similar to those in \cite{Gunnarsson:20262} (where the \swipDL
scheme was used with $A_2 = 10^{-8}$) for the $\polyord = 1$ case with similar initial
data structure, but with a modified Peclet number $\Peclet$ and $\dt$. Notably, although the
\swipDL scheme in \cite{Gunnarsson:20262} is not provably bound-preserving, a similar
bound behaviour is observed for the \swipDPL scheme, the bounds remain frozen at
the initial time and do not oscillate. This contrasts with the study in \cite{Gunnarsson:2026}
(where the \swipL scheme was used with $A_2 = 10^{-2}$ due to initial value constraints),
which, despite having numerical evidence of satisfying a maximum principle, did not
exhibit these perfectly frozen bounds and had oscillations for the maximum and minimum values.
One interpretation is that the coercivity constraint and less diffusive structure
of the \swipL scheme derived in \cite{Gunnarsson:2026} are not optimally formulated to preserve bounds.
This fallback is further suggested by the fact that the scheme requires a regularization of
the mobility to satisfy the coercivity constraint, and is not provably
bound-preserving.

\section{Summary and Outlook}\label{sec:summary}
In this paper we constructed a novel scheme which unconditionally satisfies
a discrete maximum principle for arbitrary polynomial order $\polyord$ using a scaling
limiter. Numerical examples in Section~\ref{sec:numerics}
validate the theoretical results presented in Sections \ref{sec:discretization} and \ref{sec:scheme}.
Shortcomings are discussed in Remark \ref{rem:numericalConsiderations} and
optimal order of convergence of the proposed scheme is demonstrated in
Table~\ref{tab:amp_joint}.

A key theoretical contribution is the mobility-free constant constraint
on the coercivity of the \swipDP, which removes the need for a regularization
parameter as required in \cite{Gunnarsson:20262,Gunnarsson:2026}.
Moreover, it reduces the complexity of the limiter process in \cite{Liu:2024} by
eliminating the need to adjust the cell-averages, and one simply obtains the desired
constrained bounds thanks to the scheme in Theorem \ref{thm:boundedVhk}.
Moreover, we also proved existence of a solution and highlighted energy stability
properties of the proposed scheme, despite only being provable to be unconditionally
monotone decreasing for the very special case when $p = 0$, while for $p = 1$ this
is only supported by numerical evidence.
It is of interest to further investigate the energy stability following Remark~\ref{rem:limitedEnergyDissipation},
and to explore the possibility of a provable energy stability result for higher-order
polynomial degrees while preserving boundedness. Moreover, it would be of
interest to see how competitive this scheme is against other provably bound-preserving
schemes such as \cite{Acosta:2021} and \cite{Liu:2024}, and to explore the possibility
of extending the proposed scheme to the coupled Cahn--Hilliard--Navier--Stokes system,
or other coupling schemes as a potential extension aided by Remark \ref{rem:convectiveBoundedness}
and $hp$-adaptive configurations with a theoretically valid scheme.



%
%


\bibliographystyle{siamplain}
\bibliography{refs.bib}
\end{document}